\numberwithin{equation}{section}
\font\tengothic=eufm10 scaled\magstep 1
\font\sevengothic=eufm7 scaled\magstep 1
\DeclareMathOperator{\pnt}{\raise 0.5mm \hbox{\large\bf.}}
\newtheorem{theorem}{Theorem}[section]
\newtheorem{lemma}[theorem]{Lemma}
\newtheorem{proposition}[theorem]{Proposition}
\newtheorem{corollary}[theorem]{Corollary}
\theoremstyle{definition}
\newtheorem{definition}[theorem]{Definition} 
\newtheorem{remark}[theorem]{Remark}
\newtheorem{example}[theorem]{Example}
\newtheorem{notation}[theorem]{Notation}
\definecolor{blue-violet}{rgb}{0.54, 0.17, 0.89}
\definecolor{darkgreen}{rgb}{0.01, 0.75, 0.24}
\author[G.~Favacchio]{Giuseppe Favacchio}
\address[G.~Favacchio]{DISMA-Department of Mathematical Sciences \\
	Politecnico di Torino, Turin, Italy}
\email{giuseppe.favacchio@polito.it}
\author[J.~Migliore]{Juan Migliore}
\address[J.~Migliore]{Department of Mathematics, University of Notre Dame, Notre Dame, IN 46556}
\email{migliore.1@nd.edu}
\title[ACM property for unions of lines]{The ACM property for unions of lines  in~$\mathbb P^1 \times \mathbb P^2$}
\begin{document}
\keywords{Varieties in multiprojective spaces, Arithmetically
	Cohen-Macaulay, Configuration of~lines}
\subjclass[2020]{14M05, 14N20,  13H10, 13A15}
\thanks{Version: \today }	
	
\begin{abstract}
This paper examines the Arithmetically Cohen-Macaulay (ACM) property for certain codimension~2 varieties in $\mathbb P^1\times \mathbb P^2$ called \textit{sets of lines} in $\mathbb P^1\times \mathbb P^2$ (not necessarily reduced). We discuss some obstacles to finding a general characterization. We then consider certain classes of such curves, and we address two questions. First, when are they themselves ACM? Second, in a non-ACM reduced configuration, is it possible to replace one component of a primary (prime) decomposition by a suitable power (i.e. to ``fatten" one line) to make the resulting scheme~ACM? Finally, for our classes of such curves, we characterize the locally Cohen-Macaulay property in combinatorial terms by introducing the definition of a {\it fully v-connected} configuration. We apply some of our results to give analogous ACM results for sets of lines in $\mathbb P^3$.
\end{abstract}

\maketitle


\section{Introduction}

	It is still an open problem to determine a geometric characterization of the arithmetically Cohen-Macaulay (ACM) property for varieties in a multiprojective space.  While this problem has strong connections to the analogous problem for varieties in a projective space, there are also striking differences. To give just two simple illustrations, any finite set of points in a projective space is automatically ACM, while this is not true in any multiprojective space. Indeed, a whole book has been devoted to this topic \cite{GV-book} just for the case of $\mathbb P^1 \times \mathbb P^1$. Secondly, the structure of a multihomogeneous ideal differs from that of a homogeneous ideal in important ways, and this contributes to the difficulty. 
	
	In both the projective and the multiprojective settings, the problem can have a strong combinatorial  component in addition to geometric and algebraic ones. One specific problem that has been studied in the projective setting in many different ways is the question of when a union of lines is ACM. 
	This is still wide open in projective spaces, in general. In this paper we begin the study of this problem in a specific multiprojective space, namely $\mathbb P^1 \times \mathbb P^2$. 
	
	Included in this problem (in either setting) is the question of what can be added to a non-ACM variety to make it become ACM, hopefully adding as little as possible. We  propose a new variation of this question (and solve it in our special setting): given a reduced configuration of lines that is not ACM, can one ``fatten" one of the components and make the new scheme ACM? Here, by ``fatten" we mean that in a primary decomposition we replace a prime ideal $\mathfrak p$, corresponding to one of the lines, by a power $\mathfrak p^k$ for suitable $k$. (Notice that $\mathfrak p$ is a complete intersection, so $\mathfrak p^k$ is unmixed.) After solving this problem in the situation of this paper, we give as a corollary the analogous result for unions of lines in $\mathbb P^3$. (See Theorem \ref{p. fully v-connected 1} for $\mathbb P^1 \times \mathbb P^2$ and Corollary \ref{cor:curve P3} for $\mathbb P^3$.) We also explore the property of being locally Cohen-Macaulay, giving a characterization for our curves in $\mathbb P^1 \times \mathbb P^2$.

	Many papers in the literature have studied the ACM property for different kinds of subvarieties of multiprojective spaces, especially for sets of points. 
	Despite the fact that in $\mathbb P^1\times \mathbb P^1$ there are several characterizations of the ACM property (again, see \cite{GV-book} for a detailed discussion of the topic) only a few other results are known in general. In particular, a characterization was given in  \cite{FGM2018} of the ACM property for sets of points in $(\mathbb P^1)^r=\mathbb P^1\times\cdots\times\mathbb P^1,$ and in \cite{FM2019} the authors described, under certain conditions, the ACM property for sets of points in $\mathbb P^m\times \mathbb P^n$ and, with more details, in $\mathbb P^1\times \mathbb P^n.$ See also \cite{GV08} for some other results in this direction.


A crucial difference in the study of the ACM property of a set of points in $\mathbb P^1\times \mathbb P^1$  and a set of points in any other multiprojective space is given by the codimension. A set of points has codimension two  in $\mathbb P^1\times \mathbb P^1$, and  strictly larger than two in a different multiprojective space.
So, it is interesting to approach the study of the ACM property by looking at the codimension two varieties more generally. The paper \cite{FGP}   investigates the ACM property for  2-codimensional varieties in $\mathbb P^1\times \mathbb P^1\times \mathbb P^1$. Such varieties have a different nature and meaning from sets of points in $(\mathbb P^1)^r$, $r\ge2$, but the characterization of the ACM property deeply uses a common fact.  All of them are defined by ideals generated by particular products of linear forms.  In this case, this peculiarity makes the description of the ACM property merely combinatorial.

The  vast nature of the problem  draws in many standard tools and techniques  from the homogeneous setting. These include, just to cite some of them, hyperplane sections, basic double G-linkage, liaison addition, and liaison. On the other hand,
the study of varieties in multiprojective spaces plays an important role in several branches of mathematics, and it finds an application in different contexts;  these include the study of monomial ideals (see for instance \cite{AFH, Nem}), scrolls~(\cite{ES}), symbolic powers (\cite{CFGetc, GHVT1,GHVT2}), tensor analysis (\cite{CS}) and virtual resolutions~(\cite{BES,GLLM}), just to give a partial list.


In this paper we call \textit{sets of lines} of $\mathbb P^1\times \mathbb P^2$ certain codimension two varieties in $\mathbb P^1\times \mathbb P^2$.   We will also look at these varieties as unions of planes in $\mathbb P^4$. This is possible since a bihomogeneous linear form of $k[\mathbb P^1\times \mathbb P^2]$ also defines a hyperplane in $\mathbb P^4$. Thus, a line in $\mathbb P^1\times \mathbb P^2$ can be viewed as a plane (the intersection of two hyperplanes) of $\mathbb P^4.$
This correspondence allows us to move the study of several properties, including the  Cohen-Macaulay property, from $\mathbb P^1\times \mathbb P^2$ to $\mathbb P^4.$ 
The converse is not always possible for two reasons due to the nature of the problem. 
First, a linear form in $k[\mathbb P^4]$ is not necessarily an element of  $k[\mathbb P^1\times \mathbb P^2]_{(1,0)}$ or $k[\mathbb P^1\times \mathbb P^2]_{(0,1)}$. Moreover, a plane in $\mathbb P^4$ could be defined by an ideal that is, even if bihomogeneous, not saturated in  $\mathbb P^1\times \mathbb P^2.$ See Remark \ref{P4 connection} for more detail about this connection.

From what has been  observed above, we only have lines of two different types in  $\mathbb P^1\times \mathbb P^2$.
The \textit{horizontal lines}, given by the intersection of a hyperplane of degree~$(1,0)$ with one of degree~$(0,1)$,  and the \textit{vertical lines}, that are intersections of two hyperplanes of degree $(0,1)$.
The latter gives an immediate relation with $\mathbb P^2$ since a set of vertical lines  in $\mathbb P^1 \times \mathbb P^2$ corresponds, in a way made precise in Remark \ref{cone remark}, to a cone over a set of points in $\mathbb P^2$.

 The main result in Section \ref{s.P2}, Proposition \ref{p.points in P2 and h-vector}, concerns sets of points in $\mathbb P^2$, which will be used later in the paper. Given a set of (fat) points $Y $and a curve $\mathbb V(F)$ passing through some of them, we give a characterization, in terms of the $h$-vector of a certain subscheme of $Y$, to  guarantee that $I_Y+(F)$ is a saturated ideal. In   Section \ref{sec:preliminaries} we introduce the notation and examine some obstacles to the characterization of the ACM property; see Remark \ref{r. liaison} and Example \ref{e. liaison}.
In  Section \ref{s. starting case} we apply Proposition \ref{p.points in P2 and h-vector} to study the ACM property  for sets of lines in $\mathbb P^1 \times \mathbb P^2$ satisfying a particular condition introduced in Notation \ref{notation}. Precisely, we assume that in the sets under consideration the horizontal lines are reduced and  two different horizontal lines of $Z$ are not contained in a hyperplane defined by a form of degree $(0,1)$. 

  We begin with an additional assumption (see Theorem~\ref{t. char ACM 1 plane} and Remark \ref{using result insection 2}) and then show that this can be extended (Proposition \ref{d. hat Z},  Proposition \ref{p. Z ACM -> hat Z ACM} and Corollary \ref{c.Betti ACM}). We then apply these results to a class of configurations of lines in $\mathbb P^1 \times \mathbb P^2$ where only one is non-reduced. In this situation we give a necessary and sufficient condition for ACMness in terms of the multiplicity of the non-reduced component.  As a corollary we give a result for $\mathbb P^3$, as mentioned above. Our most general result (still with some assumptions) gives a characterization for when the configuration is locally Cohen-Macaulay (see Theorem \ref{t. local CM <-> fully v connected}).


\vspace{.1in}

\noindent{\bf Acknowledgments.} Many of the results in this paper were inspired through computer experiments using  CoCoA \cite{cocoa} and Macaulay2 \cite{macaulay2}. This work was done while the first author was partially supported by Università degli studi di Catania, piano della ricerca PIACERI 2020/22 linea intervento 2 and by the National Group for Algebraic and Geometrical Structures and their Applications (GNSAGA-INdAM)  and the second author was partially supported by a Simons Foundation grant (\#309556). We thank the referees for the useful comments and suggestions. 

 
\section{A preliminary result about saturation in $\mathbb P^2$}\label{s.P2} 

In this section we give a criterion, in terms of the $h$-vector, to establish if an ideal of fat points plus a form is saturated or not.
In the next sections, we will relate it to the ACM property for some sets of lines in $\mathbb P^1 \times \mathbb P^2$; see Remark \ref{using result insection 2}. There is an immediate connection with points in $\mathbb P^2$ since a set of vertical lines  in $\mathbb P^1 \times \mathbb P^2$ is the cone over a set of points in $\mathbb P^2$.

We start the section by recalling some standard notation. 
Given a finite set of $n$ distinct points $W=\{P_1,\ldots, P_n\}$ in $\mathbb P^N$,   and $m_1,\ldots, m_n$ positive integers, we write $Y=m_1P_1+\cdots+m_nP_n$ for the set of fat points defined by the saturated homogeneous ideal
$$I_Y=\bigcap_{P_i\in W} (I_{P_i})^{m_i}\subseteq S=k[\mathbb P^N].$$ 
The degree of $Y$ is $\deg(Y)=\sum \binom{m_i +N-1}{ N-1}.$
Recall that for a zero-dimensional scheme  $Y\subset \mathbb P^N$ the Hilbert function of $Y$ is defined as the numerical function $H_Y: \mathbb N \to \mathbb N$ such that
\[
H_Y(i)=\dim_k(S/I_Y)_i= \dim_k S_i-\dim_k(I_Y)_i.
\]
Since $H_Y(\tau)=\deg(Y)$ for $\tau$ large enough, the first difference of the Hilbert function $\Delta H_Y(i)=H_Y(i)-H_Y(i-1)$ is eventually  zero. The {\em $h$-vector} of $Y$ is 
\[
h_Y= h = (1,h_1,\ldots, h_d )
\]
where $h_i  = \Delta H_Y(i)$ and $d$ is the last index such that $\Delta H_Y(i)  > 0$.

\begin{proposition}\label{p.points in P2 and h-vector}
	Let $Y=m_1P_1+m_2P_2+ \cdots m_nP_n$ be a set of (fat) points in $\mathbb P^2.$ Let $\mathcal C=\mathbb{V}(F)\subseteq \mathbb P^2$ be a curve defined by a form $F$ of degree $\deg(F)=d.$  For any $i$, assume that if $F$ vanishes at $P_i \in Y$, then the vanishing order is at least $m_i$,  so $F \in [I_{m_iP_i}]_d$.
	Let $Y_1$ and $Y_2$ be the two  zero-dimensional schemes  such that $I_{Y_1}=(I_Y+F)^{sat}$ and $I_{Y_2}=I_Y:F.$ Then $Y=Y_1\cup Y_2$, where $Y_1 \cap Y_2 = \emptyset$, and  $I_Y+(F)\subseteq k[\mathbb P^2]$ is saturated  if and only if, for each $\tau\ge 0,$ we have $$h_{Y}(\tau)=h_{Y_1}(\tau)+h_{Y_2}(\tau-d).$$
 \end{proposition}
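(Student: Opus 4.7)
The plan is to identify $Y_1$ and $Y_2$ as explicit fat-point subschemes of $Y$, then use a standard short exact sequence to compare Hilbert functions, and finally translate the resulting equality into the claimed $h$-vector statement by passing to first differences.

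For the identification, I would localize at each point $P_i$. Writing $I_Y : F = \bigcap_i ((I_{P_i})^{m_i} : F)$: if $P_i \notin \mathbb{V}(F)$ then $F$ is a unit locally and $(I_{P_i})^{m_i} : F = (I_{P_i})^{m_i}$, while if $P_i \in \mathbb{V}(F)$ the hypothesis forces $F \in (I_{P_i})^{m_i}$, hence $(I_{P_i})^{m_i} : F = S$. Therefore $Y_2 = \sum_{P_i \notin \mathbb{V}(F)} m_i P_i$, and its defining ideal is already saturated (intersection of saturated primary ideals). A parallel local analysis shows that $I_Y + (F)$ coincides with $(I_{P_i})^{m_i}$ locally at each $P_i \in \mathbb{V}(F)$ (again using $F \in (I_{P_i})^{m_i}$ there) and becomes the unit ideal at the remaining $P_i$; since $V(I_Y + (F)) \subseteq \{P_1, \ldots, P_n\}$, this pins down $Y_1 = \sum_{P_i \in \mathbb{V}(F)} m_i P_i$. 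The supports of $Y_1$ and $Y_2$ form a disjoint partition of $\{P_1, \ldots, P_n\}$, so $Y = Y_1 \cup Y_2$ with $Y_1 \cap Y_2 = \emptyset$.

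For the Hilbert function comparison I would invoke the short exact sequence
\[
0 \longrightarrow [S/(I_Y : F)](-d) \xrightarrow{\;\cdot F\;} S/I_Y \longrightarrow S/(I_Y + (F)) \longrightarrow 0,
\]
which is exact by construction of the colon ideal. Taking dimensions degree by degree yields $H_{S/(I_Y + (F))}(\tau) = H_Y(\tau) - H_{Y_2}(\tau - d)$ for every $\tau$. Since $I_Y + (F) \subseteq (I_Y + (F))^{\mathrm{sat}} = I_{Y_1}$, the ideal $I_Y + (F)$ is saturated if and only if $H_{S/(I_Y + (F))}(\tau) = H_{Y_1}(\tau)$ for every $\tau$, i.e.\ if and only if
\[
H_Y(\tau) = H_{Y_1}(\tau) + H_{Y_2}(\tau - d) \text{ for all } \tau \geq 0.
\]

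The final step is routine. Applying $\Delta$ to this identity converts it into $h_Y(\tau) = h_{Y_1}(\tau) + h_{Y_2}(\tau - d)$; conversely, summing the $h$-vector identity and using that all three Hilbert functions vanish in negative degrees, together with the compatibility $\deg(Y) = \deg(Y_1) + \deg(Y_2)$ that fixes the eventual common value, recovers the Hilbert function identity. I do not expect serious obstacles: the only delicate point is deploying the full-multiplicity hypothesis ``$F \in (I_{P_i})^{m_i}$ whenever $P_i \in \mathbb{V}(F)$'' at precisely the moment needed to clean up $(I_{P_i})^{m_i} : F$ and $(I_{P_i})^{m_i} + (F)$ locally, after which the argument reduces to standard bookkeeping.
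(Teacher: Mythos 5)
Your proof is correct and follows essentially the same route as the paper: the same short exact sequence $0 \to [S/(I_Y\!:\!F)](-d) \xrightarrow{\cdot F} S/I_Y \to S/(I_Y+(F)) \to 0$, the same identification $Y_1 = Y \cap \mathcal{C}$, $Y_2 = Y \setminus \mathcal{C}$, and the same comparison of the middle and outer terms in each degree. You merely make explicit two points the paper leaves implicit, namely the local (primary-decomposition) verification that the multiplicity hypothesis forces $(I_{P_i})^{m_i}:F = S$ and $I_Y+(F)$ to behave correctly at each point, and the equivalence between the Hilbert-function identity and its first-difference ($h$-vector) form.
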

\begin{proof}With these hypotheses we have $Y_1=Y\cap \mathcal C$ and $Y_2=Y\setminus \mathcal C$, and $Y_1 \cap Y_2 = \emptyset$ (the latter is because the multiplicity of $F$ at each point is big enough). 
	Note that $I_{Y_2}=I_{Y}:F$ is automatically saturated since $I_Y$ is saturated. On the other hand,  $I_{Y_1}=(I_Y+(F))^{sat}$ but $I_Y + (F)$ is not necessarily already saturated. Consider the short exact sequence 
	\[
	0 \to \dfrac{R}{I_Y:F}(-d)\stackrel{\cdot F}{\longrightarrow} \dfrac{R}{I_{Y}}\to \dfrac{R}{I_Y+(F)}\to 0. 
	\]
In any degree $\tau$, the first vector space in the sequence has dimension $h_{Y_2}(\tau - d)$ and the second has dimension $h_Y(\tau)$ since the corresponding ideals are saturated. Then the third ideal, $I_Y + (F)$ is saturated if and only if the third vector space has dimension $h_{Y_1}(\tau)$, since the latter Hilbert function uses the saturated ideal of $Y_1$, which is $(I_Y + (F))^{sat}$.
\end{proof}

\begin{remark}In Proposition \ref{p.points in P2 and h-vector}, if $Y$ is a reduced set of points then the condition about the vanishing order of $F$ is automatically satisfied.  
\end{remark} 

In the next example we show different ways to add points, in a ``non-saturated ideal case", in order to construct a saturated ideal.

\begin{example}
	Denote by $\ell_i$ the line of $\mathbb P^2$ defined by the linear form $x+iy,\ 1 \leq i \leq 4$. Let $\mathcal C$ be the curve given by  the union of the lines $\ell_1,\ell_2,\ell_3, \ell_4$, and let $F$ be a form defining $\mathcal C$. Let $Y$ be a set of five distinct points (no three on a line) such that only four of them belong to $C$.
	Say for instance $P_1=(1,-1,1)$, $P_2=(2,-1,1)$, $P_3=(6,-2,1)$, $P_4=(8,-2,1)$ and $P_5=(1,1,1).$

	\begin{center}
		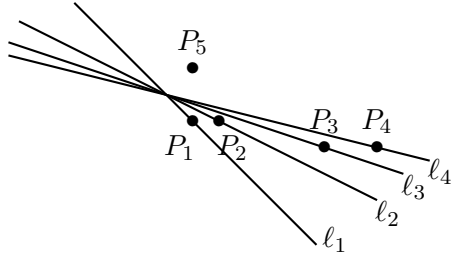
\begin{figure}[H]
			\centering
			\begin{tikzpicture}[scale=0.35]

			\draw [thick] (-3.5,3.5) - - (5.7,-5.7);
			\node  at (6.4,-5.5) { $\ell_1$};
			\draw [thick] (8,-4) - - (-5.6,2.8);
			\node  at (8.4,-4.5) { $\ell_2$};
			\draw [thick] (9,-3) - - (-6,2);
			\node  at (9.4,-3.5) { $\ell_3$};
			\draw [thick] (10,-2.5) - - (-6,1.5);
			\node  at (10.4,-2.9) { $\ell_4$};
			\node  at (1,-1) { $\bullet$};
			\node  at (0.5,-2) { $P_1$};
			\node  at (2,-1) { $\bullet$};
			\node  at (2.5,-2) { $P_2$};
			\node  at (6,-2) { $\bullet$};
			\node  at (6,-1) { $P_3$};
			\node  at (8,-2) { $\bullet$};
			\node  at (8,-1) { $P_4$};
			\node  at (1,1) { $\bullet$};
			\node at (1,2){$P_5$};
			\end{tikzpicture} 
			\caption{Five points, four of them on a quartic.} 
		\end{figure}
	\end{center}
Using the notation in Proposition \ref{p.points in P2 and h-vector}, we set  $Y_1=\{P_1, P_2,P_3,P_4\}$ and $Y_2=\{P_5\}.$ So we~have
	\[
	\begin{array}{c|c|c|c|c|ccc}
	h_Y& 1& 2& 2& 0& 0 \\
	\hline
	h_{Y_1}& 1& 2& 1& 0& 0 \\
	\hline
	h_{Y_2}& 0& 0& 0& 0& 1 \\	
	\end{array}
	\] 
	where  the third line in the table is the $h$-vector of $Y_2$ shifted by 4 (the degree of $F$). 
	By Proposition \ref{p.points in P2 and h-vector}, we note that $(F)+I_Y$ is not a saturated ideal. Now, by adding points to~$Y_1$, we show three different ways to make the resulting ideal  $(F)+I_Y$ saturated.
	
	\begin{itemize}
		\item[$i)$] 
		We add to $Y$ six points lying on $\mathcal C,$ consisting of two general points in  $\ell_1$ and $\ell_2$ and one general point in $\ell_3$ and $\ell_4$. (Note that we have a similar situation occurs by adding a total of 7, 8, 9 or 10 general points on $\ell_1,\ldots, \ell_4.$ )

\noindent\begin{minipage}{0.3\textwidth}
		\begin{figure}[H]
		\centering
		\begin{tikzpicture}[scale=0.35]
			\draw [thick] (-3.7,3.7) - - (6,-6);
\node  at (6.4,-5.5) { $\ell_1$};
\draw [thick] (8,-4) - - (-5.6,2.8);
\node  at (8.4,-4.5) { $\ell_2$};
		\draw [thick] (9,-3) - - (-6,2);
		\node  at (9.4,-3.5) { $\ell_3$};
		\draw [thick] (10,-2.5) - - (-6,1.5);
		\node  at (10.4,-2.9) { $\ell_4$};
		\node  at (1,-1) { $\bullet$};
		\node  at (0.5,-2) { $P_1$};
		\node  at (2,-1) { $\bullet$};
		\node  at (2.5,-2) { $P_2$};
		\node  at (6,-2) { $\bullet$};
		\node  at (6,-1) { $P_3$};
		\node  at (8,-2) { $\bullet$};
		\node  at (8,-1) { $P_4$};
		\node  at (1,1) { $\bullet$};
		\node at (1,2){$P_5$};
		
		\node  at (-1.1,1.1) { $\bullet$};
		\node  at (-2.3,2.3) { $\bullet$};
		\node  at (-4.4,2.2) { $\bullet$};
		\node  at (-3.6,1.8) { $\bullet$};
		\node  at (-3,1) { $\bullet$};
		\node  at (-5.6,1.4) { $\bullet$};
		\end{tikzpicture} 
	\end{figure}
\end{minipage}%
\hfill%
\begin{minipage}{0.6\textwidth}\centering	\ \ \ So, we have \ \ \
	$
	\begin{array}{c|c|c|c|c|ccc}
	h_Y& 1& 2& 3& 4& 1 \\
	\hline
	h_{Y_1}& 1& 2& 3& 4& 0 \\
	\hline
	h_{Y_2}& 0& 0& 0& 0& 1 \\	
	\end{array}
	$
\end{minipage}

		\item[$ii)$] We add to $Y$ the triple point defined by the ideal $(x,y)^3$. The vanishing order of $F$ at this point is $4.$

\noindent\begin{minipage}{0.3\textwidth}
			\begin{figure}[H]
	\centering
	\begin{tikzpicture}[scale=0.35]
			\draw [thick] (-3.7,3.7) - - (6,-6);
\node  at (6.4,-5.5) { $\ell_1$};
\draw [thick] (8,-4) - - (-5.6,2.8);
\node  at (8.4,-4.5) { $\ell_2$};
	\draw [thick] (9,-3) - - (-6,2);
	\node  at (9.4,-3.5) { $\ell_3$};
	\draw [thick] (10,-2.5) - - (-6,1.5);
	\node  at (10.4,-2.9) { $\ell_4$};
	\node  at (1,-1) { $\bullet$};
	\node  at (0.5,-2) { $P_1$};
	\node  at (2,-1) { $\bullet$};
	\node  at (2.5,-2) { $P_2$};
	\node  at (6,-2) { $\bullet$};
	\node  at (6,-1) { $P_3$};
	\node  at (8,-2) { $\bullet$};
	\node  at (8,-1) { $P_4$};
	\node  at (1,1) { $\bullet$};
	\node at (1,2){$P_5$};
	
	\node  at (0,0) { \huge $\bullet$};
	\end{tikzpicture} 
\end{figure}
\end{minipage}%
\hfill%
\begin{minipage}{0.6\textwidth}\centering	\ \ \ So, we get \ \ \
$
\begin{array}{c|c|c|c|c|ccc}
h_Y& 1& 2& 3& 4& 1 \\
\hline
h_{Y_1}& 1& 2& 3& 4& 0 \\
\hline
h_{Y_2}& 0& 0& 0& 0& 1 \\	
\end{array}
$
\end{minipage}		
		
		\item[$iii)$] Let $\ell$ be a line containing $P_5,$ say for instance $\ell$ defined by the linear form $y-z$, and add to $Y$ the four intersection points of $\ell$ and $\mathcal C$ i.e.  $P_6=(-1,1,1),$ $P_7=(-2,1,1),$  $P_8=(-3,1,1),$ $P_9=(-4,1,1).$ 

\noindent\begin{minipage}{0.3\textwidth}
				\begin{figure}[H]
	\centering
	\begin{tikzpicture}[scale=0.35]
			\draw [thick] (-3.7,3.7) - - (6,-6);
\node  at (6.4,-5.5) { $\ell_1$};
\draw [thick] (8,-4) - - (-5.6,2.8);
\node  at (8.4,-4.5) { $\ell_2$};
	\draw [thick] (9,-3) - - (-6,2);
	\node  at (9.4,-3.5) { $\ell_3$};
	\draw [thick] (10,-2.5) - - (-6,1.5);
	\node  at (10.4,-2.9) { $\ell_4$};
	\node  at (1,-1) { $\bullet$};
	\node  at (0.5,-2) { $P_1$};
	\node  at (2,-1) { $\bullet$};
	\node  at (2.5,-2) { $P_2$};
	\node  at (6,-2) { $\bullet$};
	\node  at (6,-1) { $P_3$};
	\node  at (8,-2) { $\bullet$};
	\node  at (8,-1) { $P_4$};
	\node  at (1,1) { $\bullet$};
	\node at (1,2){$P_5$};
	
	\node  at (-1,1) { $\bullet$};
	\node  at (-2,1) { $\bullet$};
	\node  at (-3,1) { $\bullet$};
	\node  at (-4,1) { $\bullet$};
	\draw [dashed] (-6.5,1) - - (10,1);			
	\end{tikzpicture} 
\end{figure}
\end{minipage}%
\hfill%
\begin{minipage}{0.6\textwidth}\centering	\ \ \ So, we have \ \ \
	$
		\begin{array}{c|c|c|c|c|ccc}
h_Y& 1& 2& 3& 2& 1\\
\hline
h_{Y_1}& 1& 2& 3& 2& 0 \\
\hline
h_{Y_2}& 0& 0& 0& 0& 1 \\	
\end{array}
	$
\end{minipage}	
\end{itemize}
\end{example}

\begin{remark}
		\begin{itemize}
			\item If $I_{Y}+F$ is saturated then $F$ is a ``separator" for $Y_2$, i.e. the form $F$  vanishes at $Y\setminus Y_2$ and it doesn't vanish at any point of $Y_2$; see for instance the paper \cite{O} where it first appears.  So, the ideal defining $Y_1$  is obtained  just adding $F$ to  $I_{Y}$.

			\item Let $Y$ be a reduced complete intersection of two planar curves of degrees $d_1 \geq 1$ and $d_2 \geq 1$. Assume that $Y_2$ is a single point, and that $F$ vanishes on $Y_1$ but not $Y_2$. If $I_{Y}+(F)$ is saturated then $F$ cannot have degree $d_1 + d_2-3$, because by the Cayley-Bacharach theorem, any  $F$ vanishing on $Y_1$ must also vanish on $Y_2$.
			
			\item  Using liaison, this kind of remark can be strengthened. For example, let $Y$ be a reduced complete intersection in linear general position and $Y_2$ consists of three points, if $I_{Y}+(F)$ is saturated  then $F$ cannot have degree $d_1 + d_2 - 4$, by a similar analysis. 
		\end{itemize}  
\end{remark}


\section{Notation, terminology and examples}\label{sec:preliminaries}

We work over a field of characteristic zero.
For a product of two projective spaces we define $V = \mathbb P^{a_1} \times  \mathbb P^{a_2}$ and 
\[
\pi_i : V  \rightarrow \mathbb P^{a_i}
\] 
to be the projection to the $i$-th component ($i = 1,2$).

Let $\{ \underline{e}_1, \underline{e}_2 \}$ be the standard basis of $\mathbb N^2$.
Let $x_{i,j}$, with $1 \leq i \leq 2$ and $0 \leq j \leq a_i$ for all $i,j$, be the variables for  $\mathbb P^{a_1}$ and $\mathbb P^{a_2}$. Let 
\[
R = k[x_{1,0}, \dots,x_{1,a_1}, x_{2,0}, \dots, x_{2,a_n}]=k[V],
\]
where the degree of $x_{i,j}$ is $\underline{e}_i$.

\begin{remark} \label{P4 connection}
A subscheme, $X$, of $V$ is defined by a bihomogeneous ideal $I_X$ that is saturated  with respect to $(x_{1,0}, \dots,x_{1,a_1})$ and to $(x_{2,0}, \dots, x_{2,a_n})$. The ideal $I_X$ is generated by a system of multihomogeneous polynomials in $R$.

Given a scheme $X\subseteq V$, its defining ideal, $I_X$, is also saturated with respect to the ideal $(x_{1,0}, \dots,x_{1,a_1}, x_{2,0}, \dots, x_{2,a_n})$. Thus, the ideal $I_X$  defines a different scheme $X'$ in $\mathbb P^{a_1+a_2+1}$. The converse is, of course, not true:  consider for instance the ideal $(x_{1,0}, \dots,x_{1,a_1})$, which defines a subscheme of $\mathbb P^{a_1 + a_2-1}$ but not of $\mathbb P^{a_1} \times \mathbb P^{a_2}$.  

Moreover, the maps in a minimal free resolution of $R/I_X$ can be seen as those of a minimal free resolution of $k[x_{1,0}, \dots,x_{1,a_1}, x_{2,0}, \dots, x_{2,a_n}]/I_{X'}$. In fact, the only difference consists in the degree of the forms appearing as entries of the matrices and not in the forms themselves -- they have  bidegree $(a,b)$ in the first case and degree $a+b$ in the second.   
So, many homological invariants of $X$ and $X'$ are strictly connected and some of them, such as the projective dimension, agree. 
This idea has often been exploited in the literature, for example in   \cite{CFGetc, FGM2018, FM2019, GHVT1}. See also \cite{GV-book}, Remark 3.3, where they say (in the context of $\mathbb P^1 \times \mathbb P^1$) that ``our study of points in $\mathbb P^1 \times \mathbb P^1$ can be seen as an investigation of these special unions of lines in $\mathbb P^3$."
\end{remark}

We say that $X$ is {\em arithmetically Cohen-Macaulay (ACM)} if $R/I_X$ is a Cohen-Macaulay ring.

Let $N = a_1 + a_2 + 2$.
Given a subscheme $X$ of $V$ together with its homogeneous ideal $I_X$, we can also consider the subscheme $\bar X$ of $\mathbb P^{N-1}$ defined by $I_X$. Notice that if $X$ is a zero-dimensional subscheme of $V$, then $I_X$  almost never defines a zero-dimensional subscheme of $\mathbb P^{N-1}$.


In the following, to shorten the notation, we write $R=k[s,t, x,y,z]=k[\mathbb P^1\times \mathbb P^2]$.
We will use the letters ``$A, A', A_i, \ldots $" to denote the elements in $R_{(1,0)}$, and the letters ``$B, B', B_j, \ldots$" for  elements in $R_{(0,1)}$. These bihomogeneous linear forms define hyperplanes in $\mathbb P^1\times\mathbb P^2$.
\begin{definition}
	We call a \textit{line} in $\mathbb P^1\times \mathbb P^2$ the intersection of two hyperplanes defined by a saturated ideal. 
	More precisely, we only have two different kind of lines, depending on the bidegree of the hyperplanes defining it: we call a \textit{horizontal line} a line of type $\mathbb V(A,B)$ and a  \textit{vertical line} a line of type $\mathbb V(B,B')$.
\end{definition}

\begin{remark} \label{cone remark}
Note that two different vertical lines are disjoint in $\mathbb P^1\times \mathbb P^2$. However, there is a useful geometric connection to $\mathbb P^4$. A vertical line is defined by two linear forms in $x,y,z$. Thus the ideal defines, in $\mathbb P^4$, a 2-dimensional linear space that contains the line $\lambda$ defined by $x=y=z=0$. Let $X$ be a union of vertical lines in $\mathbb P^1 \times \mathbb P^2$. The projection of $X$ to $\mathbb P^2$ (i.e. to the second component) is a finite set of points in $\mathbb P^2$. In $\mathbb P^4$ the same ideal $I_X$ (viewed only with the standard grading) defines a finite union of 2-planes whose intersection with the 2-plane defined by $s=t=0$ is also a finite set of points,  a copy of the projection of $X$. In this sense we can view $I_X$ as defining a cone in $\mathbb P^4$ over a finite set of points in $\mathbb P^2$, whose vertex is the line $\lambda$ (as opposed to being a point) and whose components are 2-planes (as opposed to lines). So the cone notion is not useful in $\mathbb P^1 \times \mathbb P^2$, but is useful if we are viewing the corresponding subschemes of $\mathbb P^4$. As a result we sometimes refer to an ideal like $I_X$ as a ``cone ideal."
\end{remark}

Note that the ideal generated by two different forms $A,A'$ is not saturated in $\mathbb P^1\times\mathbb P^2$.  (For instance, $\mathbb V(s,t) = \emptyset$.)
Whenever it is not necessary to make the type of line explicit, we just use the letter $L.$ 
 A \textit{set of lines}~$X$ is a finite collection of lines. We will write either
$X=L_1+L_2+\cdots+L_n\subseteq \mathbb P^1\times\mathbb P^2$ 
or
$X=\{L_1,L_2,\ldots,L_n\}\subseteq \mathbb P^1\times\mathbb P^2.$

There is a natural partition on a set of lines $X$: we will write $X=X_1\cup X_2$ to denote the partition of $X$ into horizontal and vertical lines respectively.

\begin{example} \label{three lines}
	Let $X$ be the set of lines in $\mathbb P^1 \times \mathbb P^2$
	$$X= \mathbb V(s,x)\cup \mathbb V (t,y) \cup \mathbb V(x,y).$$
Then $X$ consists of two horizontal lines, that are $\mathbb V(s,x)$ and $\mathbb V (t,y)$, and one vertical line $\mathbb V(x,y)$. In Figure \ref{fig0} a representation of such set.	
	\begin{center}
		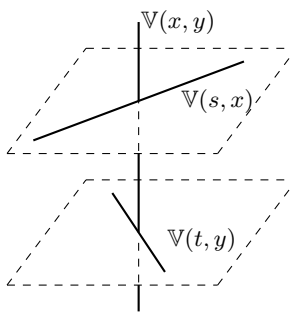
\begin{figure}[H]
			\centering
			\begin{tikzpicture}[scale=0.35]
			\draw[dashed] (0,5) - -  (8,5);  
			\draw[dashed] (3,9) - - (11,9);   
			\draw[dashed] (0,5) - -  (3,9);  
			\draw[dashed] (8,5) - - (11,9); 
			\node [font=\footnotesize] at (7.4,6.7) { $\mathbb V(t,y)$};
			\draw [thick] (6,5.5) - - (4,8.5);

			\draw[dashed] (0,10) - -  (8,10);  
			\draw[dashed] (3,14) - - (11,14);   
			\draw[dashed] (0,10) - -  (3,14);  
			\draw[dashed] (8,10) - - (11,14);
			\node [font=\footnotesize] at (8,12) { $\mathbb V(s,x)$};
			\draw [thick] (1,10.5) - - (9,13.5);

			\draw [thick] (5,4) - - (5,5);
			\draw[dashed] (5,5.1) - - (5,6.9);
			\draw [thick] (5,7) - - (5,10);
			\draw[dashed] (5,10.1) - - (5,12);
			\draw [thick] (5,12) - - (5,15);
			\node [font=\footnotesize] at (6.5,15) { $\mathbb V(x,y)$};

			\end{tikzpicture} 
			\caption{The above set $X$.} \label{fig0}
		\end{figure}
	\end{center}
\end{example}
Given a set of $n$ lines $X$ and  positive integers $m_1,\ldots, m_n$, we let $Z$ denote the subscheme of~$\mathbb P^1\times \mathbb P^2$ defined by the saturated bihomogeneous ideal
$$I_Z=\bigcap_{L_i\in X} (I_{L_i})^{m_i}\subseteq R.$$
We call $Z$ a set of {\em fat lines} in  $\mathbb P^1\times\mathbb P^2$ whose support is $X=L_1+L_2+\cdots+L_n\subseteq \mathbb P^1\times\mathbb P^2$. The scheme $Z$ will be denoted by $Z=m_1L_1+m_2L_2+\cdots+m_nL_n$.

\begin{remark}\label{r. liaison}

The aim of this paper is to investigate which properties make a set of (fat) lines ACM.
In \cite{FGM2018} and \cite{FM2019} the ACM property (or its failure) was studied for sets of points. Although the ambient spaces in these two papers are different, and the descriptions of the ACM sets of points are certainly different, they have a property in common. Given a set of points in a multiprojective space, there is a complete intersection of points containing it. So its residual is again a set of points.
An ACM set of lines in $\mathbb P^1\times\mathbb P^2$ is a codimension 2 scheme, hence viewed in $\mathbb P^4$ it  is in the same liaison class as a complete intersection. One can hope that liaison tricks will continue to work here. One powerful trick is the result of Gaeta that says that if $X \subset \mathbb P^N$ is ACM of codimension two then one can link in a finite number of steps, always using complete intersections that are generated by minimal generators of the ideal, in such a way that at each step the number of minimal generators drops by one, and the end result is a complete intersection. Conversely, if such a sequence of links exists then $X$ is ACM. If $X \subset \mathbb P^1 \times \mathbb P^2$ is viewed in $\mathbb P^4$, this can still be done using homogeneous complete intersections. However, if we want to insist that our complete intersections are generated by bihomogeneous polynomials (so all varieties in the sequence of links lie in $\mathbb P^1 \times \mathbb P^2$), it might
not be possible. 
(See for instance \cite{Jbook, MN}.) This is illustrated in the following example.
\end{remark}

\begin{example}\label{e. liaison}
Let 
\[
X = \{ \mathbb V(x,y), \mathbb V(s,x), \mathbb V(s,y), \mathbb V(t,x), \mathbb V(t,y), \mathbb V(t,z), \mathbb V(t,x+y+z), \mathbb V(s+t,y) \}.
\]

 \begin{center}
 	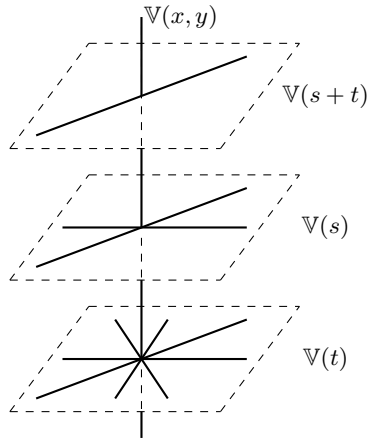
\begin{figure}[H]
 		\centering
 		\begin{tikzpicture}[scale=0.35]
 		
 		\draw[dashed] (0,10) - -  (8,10);  
 		\draw[dashed] (3,14) - - (11,14);   
 		\draw[dashed] (0,10) - -  (3,14);  
 		\draw[dashed] (8,10) - - (11,14);     		
 		\node [font=\footnotesize] at (12,12) { $\mathbb V(s+t)$};
 		\draw [thick] (1,10.5) - - (9,13.5);
 		
 		\draw[dashed] (0,0) - -  (8,0);  
 		\draw[dashed] (3,4) - - (11,4);   
 		\draw[dashed] (0,0) - -  (3,4);  
 		\draw[dashed] (8,0) - - (11,4); 
 		\node [font=\footnotesize] at (12,2) { $\mathbb V(t)$};
 		\draw [thick] (2,2) - - (9,2);
 		\draw [thick] (1,0.5) - - (9,3.5);
 		\draw [thick] (4,0.5) - - (6,3.5);
 		%
 		\draw [thick] (6,0.5) - - (4,3.5);
 		%

 		\draw[dashed] (0,5) - -  (8,5);  
 		\draw[dashed] (3,9) - - (11,9);   
 		\draw[dashed] (0,5) - -  (3,9);  
 		\draw[dashed] (8,5) - - (11,9);
 		\node [font=\footnotesize] at (12,7) { $\mathbb V(s)$};
 		\draw [thick] (2,7) - - (9,7);
 		\draw [thick] (1,5.5) - - (9,8.5);

 		\draw [thick] (5,-1) - - (5,0);
 		\draw[dashed] (5,0.1) - - (5,1.9);
 		\draw [thick] (5,2) - - (5,5);
 		\draw[dashed] (5,5.1) - - (5,6.9);
 		\draw [thick] (5,7) - - (5,10);
 			\draw[dashed] (5,10.1) - - (5,14.9);
 		\draw [thick] (5,12) - - (5,15);
 		\node [font=\footnotesize] at (6.5,15) { $\mathbb V(x,y)$};

 		\end{tikzpicture} 
 		\caption{A representation of the above set $X$.} \label{fig1}
 	\end{figure}
 \end{center}

Then one can check using Macaulay2 or CoCoA that $X$ is ACM, with four minimal generators:
\[
txy,\ sty,\ x^2yz + xy^2z + xyz^2,\ \hbox{ and } s^2tx + st^2x. 
\]
These have bidegree, respectively, (1,2), (2,1), (0,4) and (3,1). In $\mathbb P^4$ it is clear that one can link $X$ using a complete intersection consisting of homogeneous polynomials of degrees 3 and~4 that are part of a minimal generating set. However, if we want to link using bihomogeneous polynomials that are minimal generators, we claim that no such link exists. Indeed, note first that 
the given generators have pairwise common factors.  A bihomogeneous minimal generator of degree 3 has to be one of the two given generators, since any linear combination is no longer bihomogeneous. To get a bihomogeneous minimal generator of degree 4, the only possibility is a polynomial  $L \cdot sty + (s^2tx + st^2x)$ for some linear form $L$ of type (1,0). Note that $st$ would be a factor of any such  bihomogeneous form. Thus any such bihomogeneous polynomial has a factor in common with both $txy$ and $sty$, so no such link is possible.

One could  envision an approach wherein we begin with a set of lines in $\mathbb P^1 \times \mathbb P^2$ and we somehow link  in $\mathbb P^4$ without regard to having the residual be viewable as a subvariety of $\mathbb P^1 \times \mathbb P^2$, looking only to whether or not we can arrive at a complete intersection (hence $X$ is ACM). We have not seen how to make such an approach work.
\end{example}

 
\section{ACM sets of lines in $\mathbb P^1\times \mathbb P^2$: the starting case}\label{s. starting case}
In this section we focus on the ACM property for a set of fat lines $Z$ in $\mathbb P^1\times \mathbb P^2$ satisfying extra conditions.
From now on we will work under the following hypothesis.

\begin{notation}\label{notation}
	Let $Z=Z_1\cup Z_2$ be the partition of a set $Z$ of fat lines  in $\mathbb P^1\times \mathbb P^2$ into horizontal and vertical lines respectively.
	Throughout this section we will assume that
	
\begin{itemize}
	\item[(a)] $Z_1$ is a non-empty set of   horizontal reduced lines;
	\item[(b)] two different horizontal lines of $Z$ are not contained in a hyperplane defined by a form of degree $(0,1)$, i.e., if $\mathbb V(A,B), \mathbb V (A',B)\in Z$, then $(A)=(A').$  
\end{itemize}   
\end{notation}

\begin{remark}\label{r. Z2 fat points P2}
	Note that a set of minimal generators of $I_{Z_2}$ is only in the variables $x,y,z$ so $I_{Z_2}$ is the cone ideal of a set of (fat) points $Y_2$ in $\mathbb P^2.$ 
\end{remark}

\begin{remark} \label{lower star}
In this section we will use the connection between sheafification of a graded module and the direct sum over all twists of the cohomology of the sheaf. This has a version from local cohomology, but we will use the sheaf version. We refer to Hartshorne \cite{hartshorne} for sheafification, and to \cite[Section 1.1]{Jbook}  for the rest, but we recall the basic facts here that we will use in this section.

If $M$ is a graded $R$-module then $\tilde M$ denotes its sheafification. If $\mathcal F$ is a sheaf on $\mathbb P^n$ and $i \geq 0$ is an integer then 
\[
H^i_*(\mathcal F) = \bigoplus_{t \in \mathbb Z} H^i(\mathbb P^n, \mathcal F(t)).
\]
This is a graded $R$-module. In particular, if $I$ is a homogeneous ideal and $\mathcal I = \tilde I$ then $I = H^0_*(\mathcal I)$ if and only if $I$ is already saturated; in general, the module on the right is the saturation of $I$. If 
\[
0 \rightarrow M_1 \rightarrow M_2 \rightarrow M_3 \rightarrow 0
\]
is a short exact sequence of graded modules then 
\[
0 \rightarrow \tilde M_1 \rightarrow \tilde M_2 \rightarrow \tilde M_3 \rightarrow 0
\]
is a short exact sequence of sheaves, but this is only left exact on global sections in general, and we get a long exact on cohomology.

\end{remark}

\begin{remark}\label{r. CI Z1}
	Let $Z$ be as in Notation \ref{notation}.  If $|\pi_1(Z_1)|=1$ then all the horizontal lines of $Z$ are contained in the same plane, say $\mathbb V(A).$ Hence $Z_1$ is a complete intersection of codimension 2. Indeed, say $Z_1=\mathbb V(A,B_1)\cup\cdots \cup \mathbb V(A,B_N)$, we note that the ideal defining $Z_1$ is $I_{Z_1}=(A, F),$ where  $F=B_1B_2 \cdots B_N$. Since $\deg A=(1,0)$ and  $\deg F=(0,N)$ the ideal $(A,F)$ is generated by a regular sequence.  
\end{remark}

The next result make evident what connection there is between the saturation problem studied in Section \ref{s.P2} and the ACM property of $Z$.

\begin{theorem}\label{t. char ACM 1 plane} Let $Z$ be as in Notation \ref{notation} and assume $|\pi_1(Z_1)|=1$.  Let $F$ be of the form in Remark \ref{r. CI Z1}. Then
	$Z$ is ACM if and only if $I_{Z_2}+(F)$ is saturated in $k[x,y,z].$
\end{theorem}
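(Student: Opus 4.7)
The plan is to use the Mayer--Vietoris short exact sequence arising from $I_Z = I_{Z_1}\sect I_{Z_2}$ together with local cohomology to reduce the ACM-ness of $Z$ to a depth condition on $k[x,y,z]/(I_{Y_2}+(F))$. By Remark \ref{r. CI Z1}, after a linear change of coordinates on $\mathbb P^1$ we may assume $A=s$, so $I_{Z_1}=(s,F)$; by Remark \ref{r. Z2 fat points P2}, $I_{Z_2}$ is the extension to $R$ of the ideal $I_{Y_2}\subseteq k[x,y,z]$ of a set of fat points $Y_2\subset\mathbb P^2$.

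First I would write down the exact sequence
$$0 \to R/I_Z \to R/I_{Z_1}\oplus R/I_{Z_2} \to R/(I_{Z_1}+I_{Z_2}) \to 0.$$
Each of the two summands in the middle is Cohen--Macaulay of dimension $3$: $R/I_{Z_1}$ is a complete intersection on the regular sequence $s,F$, while $R/I_{Z_2}\iso k[s,t]\otimes_k k[x,y,z]/I_{Y_2}$ is a polynomial extension of the one-dimensional CM ring $k[x,y,z]/I_{Y_2}$. Since $I_Z$ is saturated we have $H^0_{\mm}(R/I_Z)=0$, so taking local cohomology at the irrelevant ideal $\mm$ collapses the long exact sequence into isomorphisms
$$H^i_{\mm}(R/I_Z) \iso H^{i-1}_{\mm}(R/(I_{Z_1}+I_{Z_2})), \qquad i=1,2.$$
Since $\dim R/I_Z = 3$, it follows that $Z$ is ACM if and only if $\depth_R R/(I_{Z_1}+I_{Z_2}) \geq 2$.

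Next I would translate this depth condition into saturation. Because $F$ and $I_{Z_2}$ involve only the variables $x,y,z$, the quotient $R/(I_{Z_1}+I_{Z_2})$ is annihilated by $s$ and is isomorphic to $k[t]\otimes_k k[x,y,z]/(I_{Y_2}+(F))$. The element $t$ is a nonzerodivisor on this module and quotienting by it returns $k[x,y,z]/(I_{Y_2}+(F))$, so
$$\depth_R R/(I_{Z_1}+I_{Z_2}) \;=\; 1 + \depth_{k[x,y,z]} k[x,y,z]/(I_{Y_2}+(F)).$$
Thus $\depth_R R/(I_{Z_1}+I_{Z_2})\ge 2$ if and only if $k[x,y,z]/(I_{Y_2}+(F))$ has positive depth, i.e., $(x,y,z)\notin \Ass\bigl(k[x,y,z]/(I_{Y_2}+(F))\bigr)$, which is precisely the statement that $I_{Y_2}+(F)$ is saturated in $k[x,y,z]$.

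The main point that requires care is the degenerate case where $F$ does not vanish at any point of $Y_2$: then $V(I_{Y_2}+(F))=\emptyset$ in $\mathbb P^2$, the quotient $k[x,y,z]/(I_{Y_2}+(F))$ is zero-dimensional of depth $0$, and $I_{Y_2}+(F)$ fails to be saturated. Geometrically this is exactly the case where $Z_1$ and $Z_2$ are disjoint, so $Z$ is disconnected and cannot be ACM; algebraically the same chain of equivalences goes through, since the equivalence ``saturated $\iff$ depth $\ge 1$'' holds for both $0$- and $1$-dimensional graded quotients of $k[x,y,z]$. No other technical hurdle arises: the Mayer--Vietoris sequence is formal, the middle terms are manifestly Cohen--Macaulay of the right dimension, and the polynomial-extension formula for depth is standard.
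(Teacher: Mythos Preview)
Your proof is correct and follows essentially the same Mayer--Vietoris approach as the paper: both exploit the exact sequence coming from $I_Z=I_{Z_1}\cap I_{Z_2}$, use the Cohen--Macaulayness of $R/I_{Z_1}$ and $R/I_{Z_2}$ to kill the middle cohomology, and reduce ACM-ness of $Z$ to a condition on $I_{Z_1}+I_{Z_2}$. Your local-cohomology/depth formulation is slightly more streamlined than the paper's sheaf-cohomology argument (which treats $H^1_*(\mathcal I_Z)$ and $H^2_*(\mathcal I_Z)$ separately and splits off the case where $I_{Z_2}+(F)$ has height~$3$), but the substance is the same.
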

\begin{proof} 

Notice that if $I_{Z_2} + (F)$ is artinian then it is not saturated, so the last condition includes the statement that $I_{Z_2}+(F)$ has height 2 in $k[x,y,z]$.

We look at $Z$, $Z_1$ and $Z_2$ as unions of planes in $\mathbb P^4.$
Consider the short exact sequence
\begin{equation}\label{eq.ses}
	0\to I_Z\to I_{Z_1}\oplus I_{Z_2}\to I_{Z_1}+ I_{Z_2}\to 0.
\end{equation}
	Since $|\pi_1(Z_1)| = 1$, we have  $I_{Z_1}=(A, F)$, a complete intersection (see Remark \ref{r. CI Z1}).
	In particular, note that $Z_1$ and $Z_2$ are ACM unions of planes in $\mathbb P^4$ (see Remark \ref{r. Z2 fat points P2}).

	Sheafifyng the exact sequence (\ref{eq.ses}) and taking  cohomology, we obtain the following exact diagrams:
	
\begin{equation} \label{eq.sheaf ses}
\begin{array}{ccccc}
0 \to I_Z \to I_{Z_1} \oplus I_{Z_2} & \longrightarrow &  [I_{Z_1} + I_{Z_2}]^{sat} \to  H_*^1(\mathcal I_Z) \to 0  \\ 
 & \searrow \hspace{.3in} \nearrow   \\ 
 & I_{Z_1} + I_{Z_2} \\
  & \hspace{-.4in} \nearrow & \hspace{-.3in} \searrow \hfill \\
 \hspace{1.3in} 0 && 0 \hfill
 \end{array}
 \end{equation}
 and
\begin{equation} \label{eq.sheaf ses2}
 \begin{array}{ccccc}
 0 \to H_*^1(\mathcal I_{Z_1} + \mathcal I_{Z_2}) \to  H_*^2(\mathcal I_Z)\to 0.
\end{array}
\end{equation}
Recall that $F \in k[x,y,z]$.  Recall also that $Z$ is ACM if and only if $H^1_*(\mathcal I_Z) = H^2_*(\mathcal I_Z) = 0$. 
Note that the form $A$ is a regular element in $\dfrac{R}{I_{Z_2}+(F)}$ and recall that
\[
I_{Z_1} + I_{Z_2} = (A,F) + I_{Z_2}.
\] 
Suppose first that $F$ does not vanish on any component of $Z_2$. Then $F + I_{Z_2}$ is an unmixed (in particular saturated) height 3 ideal in $R$, and $(A,F) + I_{Z_2} = I_{Z_1} + I_{Z_2}$ is a saturated ideal of height 4 in $R$.
From (\ref{eq.sheaf ses2}) we know that 
\[
\hbox{$H^2_*(\mathcal I_Z) = 0$ \text{ if and only if}  $H^1_* (\mathcal I_{Z_1} + \mathcal I_{Z_2}) = 0 $}.
\]
Since in the current situation $I_{Z_1} + I_{Z_2}$ defines a zero-dimensional scheme, the first cohomology module $H^1_*(\mathcal{ I}_{Z_1}+\mathcal{ I}_{Z_2})$ does not vanish. Thus $Z$ has no hope of being ACM.

What we have just shown is that if $H^2_*(\mathcal I_Z) = 0$ then $I_{Z_2} + (F)$ has height 2 (either in $R$ or in $k[x,y,z]$). We claim that the converse is also true. Indeed, if $I_{Z_2} + (F)$ has height 2 in~$k[x,y,z]$, its saturation defines a zero-dimensional scheme in $\mathbb P^2$, which is automatically ACM. Thus viewed in $R$, this saturation defines an ACM surface in $\mathbb P^4$, so the (not necessarily saturated) ideal $I_{Z_1} + I_{Z_2}$ defines the hyperplane section, which is also ACM. But the first cohomology of the ideal sheaf does not depend on whether the original ideal was saturated or not, so it vanishes and hence $H^2_*(\mathcal I_Z) = 0$ by (\ref{eq.sheaf ses2}).

Thus we can assume without loss of generality that $I_{Z_2} + (F)$ has height 2 (either in $R$ or in~$k[x,y,z]$) and that $H^2_*(\mathcal I_Z) = 0$, and we focus on $H^1_*(\mathcal I_Z)$.
From (\ref{eq.sheaf ses}) we obtain

\medskip

\begin{tabular}{ccllcc}
$H^1_*(\mathcal I_Z) = 0$ & if and only if & $I_{Z_1} + I_{Z_2}$ is saturated in $R$ \\
& if and only if & $I_{Z_2} + (F)$ is saturated in $k[x,y,z]$. \\
\end{tabular}

\medskip

\noindent 
(As before, an ideal in $k[x,y,z]$ of height 2 is Cohen-Macaulay if and only if it is saturated if and only if it is unmixed.)
\end{proof}

\begin{remark}\label{using result insection 2}
Theorem \ref{t. char ACM 1 plane} shows that the ACM property of a set of  (fat) lines $Z=Z_1\cup Z_2$ where $|\pi_1(Z_1)|~=~1$  only depends on the saturation of the ideal $(F)+I_{Z_2}\subseteq k[\mathbb P^2]$. In $\mathbb P^2$, the set $Z_2$ can be viewed as a set of points and the form $F$ defines a curve, $\mathcal C$, that is a union of lines. So, if the hypotheses of Proposition \ref{p.points in P2 and h-vector} are satisfied, the ACM property only depends on the $h$-vectors of $I_{Z_2}$, $I_{Y_1}=(F)+I_{Z_2}$ and $I_{Y_2}=   I_{Z_2} : I_{\mathcal C}.$  
More precisely, $Z$ is ACM if and only if $h_{Z_2}(\tau) =h_{Y_1}(\tau) + h_{Y_2}(\tau-d)$ where $d$ is the degree of $F\in k[\mathbb P^2].$ 
\end{remark}

We introduce the following definition.	Recall that $s$ and $t$ are the variables of degree $(1,0)$ in the coordinate ring of $\mathbb P^1 \times \mathbb P^2$, $R=k[s,t,x,y,z]$.
\begin{definition}\label{d. hat Z}
	Let $Z$ be a set of lines in $\mathbb P^1\times \mathbb P^2$ as in Notation \ref{notation}.  
	We define $$\hat Z=\{\mathbb V(s,B)\ |\ \mathbb V(A,B) \in Z_1\ \text{for some}\ A \}\cup Z_2.$$ 
\end{definition}
\noindent The set $\hat Z$ consists of all the vertical lines in $Z$, together with the projections  of all the horizontal lines in $Z$ down to the hyperplane defined by $\mathbb V(s)$.  See Figure $\ref{fig:Z vs hat Z}$ and Figure $\ref{fig:X vs hat X}$ for two examples. 

Note that in Definition \ref{d. hat Z}  the set  $\hat Z$ satisfies the hypotheses of Theorem \ref{t. char ACM 1 plane}; indeed we have $|\pi_1 (\hat Z_1)|=1.$

\begin{proposition}\label{p. Z ACM -> hat Z ACM}
	Let $Z$ be as in Notation \ref{notation}. If $Z$ is ACM then $\hat Z$ is ACM, and $Z$ and $\hat Z$ share the same multigraded homological invariants. 
\end{proposition}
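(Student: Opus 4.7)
The plan is to exhibit $\hat Z$ as the $\lambda=0$ specialization of a flat family $\{Z_\lambda\}_{\lambda\in\mathbb A^1}$ whose generic member is projectively equivalent to $Z$, and then to use the ACM property of $Z$ to verify flatness at $\lambda=0$. Once flatness is in hand, $\hat Z$ inherits the multigraded Hilbert function of $Z$, and since both schemes have codimension two, this will force $\hat Z$ to be ACM with the same minimal bigraded free resolution (and hence the same multigraded Betti numbers, projective dimension, \emph{etc.}) as $Z$.

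Concretely, using Notation~\ref{notation}(b) together with Remark~\ref{r. CI Z1}, I would first organize $I_Z$ as
\[
I_Z = \bigcap_{j=1}^r (A^{(j)}, F_j) \cap I_{Z_2},
\]
where $A^{(1)},\ldots,A^{(r)}$ are the distinct (up to scalar) degree-$(1,0)$ forms appearing in $Z_1$ and $F_j$ is the product of the $B$-forms of the horizontal lines lying in $\mathbb V(A^{(j)})$; then $I_{\hat Z}=(s,F)\cap I_{Z_2}$ with $F=\prod_j F_j$. For $\lambda\in\mathbb A^1$ I would set
\[
I_\lambda = \bigcap_{j=1}^r \bigl((1-\lambda)s + \lambda A^{(j)},\, F_j\bigr) \cap I_{Z_2},
\]
so that $I_1=I_Z$ and $I_0=I_{\hat Z}$. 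For every $\lambda\neq 0$ the scheme $Z_\lambda$ is related to $Z$ by a bigraded linear change of variables in the pencil $\langle s,A^{(j)}\rangle$ for each $j$, so the multigraded Hilbert function and Betti numbers of $R/I_\lambda$ are constant on $\mathbb A^1\setminus\{0\}$ and equal to those of $R/I_Z$.

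To show that these invariants do not jump at $\lambda=0$, I would follow the strategy of Theorem~\ref{t. char ACM 1 plane}. Sheafify the Mayer--Vietoris sequence
\[
0 \to I_\lambda \to I_{Z_1(\lambda)} \oplus I_{Z_2} \to I_{Z_1(\lambda)} + I_{Z_2} \to 0
\]
and, invoking Remark~\ref{lower star}, reduce the desired vanishings $H^1_*(\mathcal I_{Z_\lambda}) = H^2_*(\mathcal I_{Z_\lambda}) = 0$ to the saturation of an ideal of the form $I_{Y_2}+(F)$ in $k[x,y,z]$, where $Y_2\subset\mathbb P^2$ is determined by $Z_2$ and $F$ is the fixed form above -- data that are manifestly independent of $\lambda$. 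The vanishing at a single $\lambda\neq 0$ (guaranteed by the ACMness of $Z$) then propagates to $\lambda=0$, giving flatness of $\{I_\lambda\}$ and therefore the proposition.

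The hard part will be the Mayer--Vietoris/saturation reduction when $r\geq 2$, because $I_{Z_1(\lambda)}$ is no longer a complete intersection and extra higher cohomology could in principle obstruct the argument that worked in Theorem~\ref{t. char ACM 1 plane}. I would handle this by iterating the one-hyperplane case, peeling off one form $A^{(j)}$ at a time via an auxiliary exact sequence; at each step one must check that the newly introduced cohomology is still controlled by the $\lambda$-independent combinatorial data attached to $Z_2$ and $\mathbb V(F)$ in $\mathbb P^2$.
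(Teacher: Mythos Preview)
Your degeneration idea is natural, but the assertion that $Z_\lambda$ is obtained from $Z$ by a bigraded linear change of coordinates for every $\lambda\neq 0$ is false once $r\geq 4$. The assignment $A\mapsto (1-\lambda)s+\lambda A$ on $k[s,t]_1$ is affine, not linear; it does not descend to an element of $\mathrm{PGL}_2$ acting on $\mathbb P^1$, and indeed the cross-ratio of four of the points $[A^{(j)}]$ is not preserved along the family. So you have not established that the Hilbert function of $R/I_\lambda$ is constant even on $\mathbb A^1\setminus\{0\}$, and the flatness argument collapses before you ever reach the acknowledged difficulty at $\lambda=0$. The iterated Mayer--Vietoris sketch you offer for that step is also not convincing as stated: when $r\ge 2$ the ideal $I_{Z_1(\lambda)}+I_{Z_2}$ genuinely depends on $\lambda$ through the $A^{(j)}$, so it is not clear how to reduce everything to a single $\lambda$-independent saturation condition in $k[x,y,z]$.

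The paper bypasses the whole degeneration with a one-line hyperplane-section argument. Choose a general form of degree $(1,0)$, say the variable $t$; it is a non-zerodivisor on $R/I_Z$. Modulo $t$, every $A^{(j)}\in k[s,t]_1$ becomes a nonzero scalar multiple of $s$, so each horizontal component $(A^{(j)},B)$ of $Z$ and the corresponding component $(s,B)$ of $\hat Z$ have the \emph{same} image $(s,t,B)$ in $R/(t)$; the vertical components are unchanged. Thus $Z$ and $\hat Z$, viewed in $\mathbb P^4$, have identical general hyperplane sections by $\mathbb V(t)$. Since $Z$ is ACM, this section is ACM, and then the Huneke--Ulrich result cited in the paper (\cite{HU}, Proposition~2.1) forces $\hat Z$ to be ACM as well. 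Because $t$ is a non-zerodivisor on both $R/I_Z$ and $R/I_{\hat Z}$, reducing modulo $t$ preserves the multigraded minimal free resolutions, and since the two quotients coincide, so do all the multigraded homological invariants of $Z$ and $\hat Z$.
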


\begin{proof}

Without loss of generality we can assume that the linear form $t$ (one of the indeterminates)  is a regular element in $R/I_Z$. If we look at $Z$ and $\hat Z$ as unions of planes in $\mathbb P^4$, the hyperplane defined by $t$ does not contain any component either of $Z$ or of $\hat Z$, and so meets each such component in a line.

Let us examine the effect on both kinds of components of $Z$. A ``horizontal" component is defined by an ideal of the form $(\ell_i, m_i)$ with $\ell_i \in k[s,t]$ and $m_i \in k[x,y,z]$, so the hyperplane section defined by the linear form $t$ is an ideal of the form $(\ell_i, m_i, t) = (s,t,m_i)$. A ``vertical" component is defined by an ideal of the form $(L_j,M_j)$, with $L_j,M_j \in k[x,y,z]$, so the hyperplane section by $t$ is defined by~$(L_j,M_j,t)$.  

Since $Z$ is ACM (viewed in $\mathbb P^4$), we have that $I_Z + (t)$ is the saturated ideal of the hyperplane section (a union of lines), which is ACM. 
The entire hyperplane section by $t$  then has the saturated ideal
\[
 I_Z + (t)  =  \bigcap_i (s, m_i,t) \  \cap \ \bigcap_j (L_j, M_j, t),
\]
since $Z$ is ACM, and defines an ACM curve in $\mathbb P^4$.

On the other hand, up to saturation the latter is also the hyperplane section by $t$ of $\hat Z$. Since the  curve is ACM, the union of planes $\hat Z$ must also be ACM by \cite[Proposition 2.1]{HU}. Thus 
\[
I_{\hat Z} + (t) = \Big( \bigcap_i (s, m_i) \cap \bigcap_j (L_j, M_j) \Big) + (t). 
\]
Finally, since $Z$ and $\hat Z$ are both ACM with the same hyperplane section, they must in fact have the same homological invariants.

Up to this point we have only shown that $Z$ and $\hat Z$ have the same {\em graded} homological invariants, by viewing $Z$ and $\hat Z$ as subschemes of $\mathbb P^4$. But in fact they began in $\mathbb P^1 \times \mathbb P^2$, so $I_Z$ and $I_{\hat Z}$ have multigraded Betti numbers and in particular multigraded minimal generators. When we reduce by the non-zerodivisor $t$, this preserves the multigrading. Hence we have the result.
\end{proof}

The following  is an immediate consequence of Proposition \ref{p. Z ACM -> hat Z ACM}. It gives us a necessary condition for the Cohen-Macaulayness of $Z.$ We denote by $\beta_{0,(a,b)}(I_Z)$ the minimal number of generators of $I_Z$ of degree $(a,b).$

\begin{corollary}\label{c.Betti ACM}
	Let $Z$ be a set of lines in $\mathbb P^1\times \mathbb P^2$ as in Notation \ref{notation}. If $Z$ is ACM, then $\beta_{0,(a,b)}(I_Z)=0$ for each $a>1.$
\end{corollary}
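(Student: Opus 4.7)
The plan is to invoke Proposition \ref{p. Z ACM -> hat Z ACM}, which says $I_Z$ and $I_{\hat Z}$ have the same multigraded Betti numbers; hence it suffices to prove $\beta_{0,(a,b)}(I_{\hat Z})=0$ for all $a > 1$. I would do this by exhibiting a multihomogeneous generating set of $I_{\hat Z}$ whose elements all have first degree $0$ or $1$: since a minimal multigraded generating set can be extracted from any multihomogeneous generating set by discarding redundancies, the set of bidegrees occurring in a minimal set is a subset of those occurring in the original.

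To build such a generating set, recall from Remark \ref{r. CI Z1} that $I_{\hat Z_1} = (s, F)$ with $F \in k[x,y,z]$ of degree $(0, N)$, and from Remark \ref{r. Z2 fat points P2} that $I_{\hat Z_2}$ is generated by forms in $k[x,y,z]$, all of bidegree $(0,*)$. The key step is to establish the decomposition
\[
I_{\hat Z} \;=\; (s, F) \cap I_{\hat Z_2} \;=\; s \cdot I_{\hat Z_2} \,+\, \bigl((F) \cap I_{\hat Z_2}\bigr).
\]
The inclusion $\supseteq$ is immediate. For the reverse, take $G \in I_{\hat Z}$ and write $G = G_0 + s H$ with $G_0 \in k[t,x,y,z]$. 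Since $I_{\hat Z_2}$ is generated by polynomials in $x, y, z$ only, membership $G \in I_{\hat Z_2}$ forces $G_0$ and $H$ separately to lie in $I_{\hat Z_2}$ (comparing $s$-coefficients). Similarly, $G \in (s, F)$ forces $G_0 \in (F)$ modulo $s$; since $G_0$ already lives in $k[t,x,y,z]$, in fact $G_0 \in (F) \cap I_{\hat Z_2}$. Then $sH \in s\,I_{\hat Z_2}$ and $G_0 \in (F) \cap I_{\hat Z_2}$, proving the claim.

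From this decomposition, a multihomogeneous generating set of $I_{\hat Z}$ is given by $\{s \cdot g_i\}$, for $g_i$ ranging over a set of multihomogeneous generators of $I_{\hat Z_2}$ (each of bidegree $(1,*)$), together with multihomogeneous generators of $(F) \cap I_{\hat Z_2}$, all lying in $k[x,y,z]$ and hence of bidegree $(0,*)$. Consequently every minimal generator of $I_{\hat Z}$ has first degree at most $1$, giving $\beta_{0,(a,b)}(I_{\hat Z}) = 0$ for $a > 1$, and the result transfers to $I_Z$ via Proposition \ref{p. Z ACM -> hat Z ACM}. The only delicate point is verifying the decomposition in the key step; everything else is a direct read-off of bidegrees.
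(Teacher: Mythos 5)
Your proposal is correct and follows the paper's route exactly: the paper's entire proof is the one line ``Indeed this is true for $\hat Z$,'' relying on Proposition \ref{p. Z ACM -> hat Z ACM} to transfer the statement from $\hat Z$ to $Z$. Your decomposition $I_{\hat Z}=s\cdot I_{\hat Z_2}+\bigl((F)\cap I_{\hat Z_2}\bigr)$ simply supplies, correctly, the verification for $\hat Z$ that the paper leaves implicit.
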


\begin{proof}

 Indeed this is true for $\hat Z$.
 \end{proof}

\begin{remark}
Corollary \ref{c.Betti ACM} seems very surprising, at first glance. But it assumes from the beginning that $Z$ is ACM, and then that it satisfies condition (b) of Notation \ref{notation}. Neither of these is particularly restrictive by itself, but the point is that the combination is restrictive. Still, having both conditions does not by any means imply that $Z = \hat Z$. For instance, consider Example \ref{three lines}. One might think at first that there is a minimal generator of type $(2,1)$ (e.g. $stx$), but in fact one can check that the ideal has two minimal  generators of bidegree $(1,1)$ and one of bidegree $(0,2)$.

Furthermore, starting with a $Z$ that is ACM and satisfies Notation \ref{notation}, it is easy to use basic double linkage to produce a new $Z'$ that is ACM and does not satisfy $\beta_{0,(a,b)}(I_Z)=0$ for each $a>1$, but we lose the property given in Notation \ref{notation} (b). For instance, returning to Example~\ref{three lines}, one could form $(s+t) \cdot I_Z + (xy)$, which is ACM and has a minimal generator of bidegree (2,1); but now it does not satisfy Notation \ref{notation}.

We do not know if the converse of Corollary \ref{c.Betti ACM} is true. So we pose the following question:
\begin{quotation}
	{\em Let $Z$ be as in Notation \ref{notation}. Assume $\hat Z$ is ACM and $\beta_{0,(a,b)}(I_Z)=0$ for each $a>1.$ Is $Z$ ACM? }
\end{quotation}
\noindent However, the next example shows that, without the condition on $\beta_{0,(a,b)} (I_Z)$,  $\hat Z$ ACM does not imply $Z$ ACM.

\end{remark}

\begin{example}\label{e. Z not ACM hat Z ACM}
Let $Z$ the set of lines in $\mathbb P^1\times \mathbb P^2$ defined by the ideal $I_Z=(t,x)\cap (s,y)$. The set $Z$ is clearly not ACM. However, $\hat Z$, defined by $I_{\hat Z}=(s,x)\cap (s,y)$, is ACM. 
	
\begin{figure}[h]
	\centering
	\subfloat[The set $Z$.]{
		\begin{tikzpicture}[scale=0.33]
	\draw[dashed] (0,5) - -  (8,5);  
	\draw[dashed] (3,9) - - (11,9);   
	\draw[dashed] (0,5) - -  (3,9);  
	\draw[dashed] (8,5) - - (11,9); 
	\node [font=\footnotesize] at (3,6.6) { $\mathbb V(s,y)$};
	\draw [thick] (6,5.5) - - (4,8.5);
	
	\draw[dashed] (0,10) - -  (8,10);  
	\draw[dashed] (3,14) - - (11,14);   
	\draw[dashed] (0,10) - -  (3,14);  
	\draw[dashed] (8,10) - - (11,14);
	\node [font=\footnotesize] at (8,12) { $\mathbb V(t,x)$};
	
	\draw [thick] (1,10.5) - - (9,13.5);
	\end{tikzpicture}	\label{fig:01}}		\qquad
	\subfloat[The set $\hat Z$.]{
		\begin{tikzpicture}[scale=0.42]
	
	\node [font=\footnotesize] at (3,12) { $\mathbb V(s,y)$};
	\draw [thick] (6,10.5) - - (4,13.5);
	
	\draw[dashed] (0,10) - -  (8,10);  
	\draw[dashed] (3,14) - - (11,14);   
	\draw[dashed] (0,10) - -  (3,14);  
	\draw[dashed] (8,10) - - (11,14);
	\node [font=\footnotesize] at (8,12) { $\mathbb V(s,x)$};
	
	\draw [thick] (1,10.5) - - (9,13.5);

	\end{tikzpicture} 
		\label{fig:02}}
	\caption{ Example \ref{e. Z not ACM hat Z ACM}.}
	\label{fig:Z vs hat Z}
\end{figure}

\end{example}

From Proposition \ref{p. Z ACM -> hat Z ACM} we know that the ACM property of $\hat Z$ is   a necessary condition for the ACM property of $Z$. 
Example \ref{e. Z not ACM hat Z ACM} shows that it is not sufficient. The following lemma will give us another necessary condition.

\begin{lemma}\label{l. v-connected}
	Let $Z$ be an ACM set of lines as in Notation \ref{notation}. Let $\mathbb V(A,B), \mathbb V(A',B')\in Z$ be two horizontal lines and $A\neq A'$. Then $\mathbb V(B,B')\in Z.$  
\end{lemma}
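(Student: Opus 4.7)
The plan is to argue by contradiction using Hartshorne's connectedness theorem applied to a suitable localization of $R/I_Z$. Assume $Z$ is ACM but $\mathbb V(B,B')\notin Z$. After a linear change of variables in $\langle s,t\rangle$ and in $\langle x,y,z\rangle$, I may assume $A=s$, $A'=t$, $B=x$, $B'=y$, so that $\mathbb V(s,x),\mathbb V(t,y)\in Z$ and $\mathbb V(x,y)\notin Z$. I view $Z$ as a union of $2$-planes in $\mathbb P^{4}$ via Remark~\ref{P4 connection} and consider the height-$4$ prime $\mathfrak{p}=(s,t,x,y)\subset R$, which cuts out the point $[0{:}0{:}0{:}0{:}1]$ of $\mathbb P^{4}$.

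The first step is to describe $I_Z R_\mathfrak p$ by listing which components of $Z$ survive localization at $\mathfrak p$. A horizontal component $\mathbb V(A'',B'')$ contributes iff $(A'',B'')\subseteq\mathfrak p$, which, since $A''\in\langle s,t\rangle\subset\mathfrak p$ automatically, is equivalent to $B''\in\langle x,y\rangle$. A vertical component $\mathbb V(B_1,B_2)$ contributes iff $B_1,B_2\in\langle x,y\rangle$, forcing $(B_1,B_2)=(x,y)$, and by hypothesis this plane is not in $Z$. So $I_Z R_\mathfrak p=\bigcap_{L\in\mathcal H}I_L R_\mathfrak p$ where $\mathcal H\supseteq\{\mathbb V(s,x),\mathbb V(t,y)\}$ is the set of surviving horizontal components.

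Next I would analyze the punctured spectrum $\Spec(R_\mathfrak p/I_Z R_\mathfrak p)\setminus\{\mathfrak p R_\mathfrak p\}$. For two minimal primes $(A_1,B_1)$ and $(A_2,B_2)$ (with $A_i\in\langle s,t\rangle$ and $B_i\in\langle x,y\rangle$), the corresponding components meet outside the closed point iff $(A_1,A_2,B_1,B_2)$ has height $\leq 3$ in $R_\mathfrak p$. If $A_1\sim A_2$, this ideal is contained in $(A_1,x,y)$ of height $3$, so the components share a curve. If $A_1\not\sim A_2$, then $\langle A_1,A_2\rangle=\langle s,t\rangle$; condition~(b) of Notation~\ref{notation} forbids $B_1\sim B_2$, so $\langle B_1,B_2\rangle=\langle x,y\rangle$, and the ideal sum equals $\mathfrak p$ of height $4$, so the components share only the closed point. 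Hence the components cluster by their $A$-value, and since $s\not\sim t$ the components $\mathbb V(s,x)$ and $\mathbb V(t,y)$ lie in separate clusters, so the punctured spectrum is disconnected.

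Finally, since $Z$ is ACM, $(R/I_Z)_{\mathfrak p}=R_\mathfrak p/I_Z R_\mathfrak p$ is Cohen--Macaulay of depth equal to its Krull dimension $2$; Hartshorne's connectedness theorem (depth $\geq 2$ forces the punctured spectrum of a local Noetherian ring to be connected) then yields a contradiction, so $\mathbb V(B,B')\in Z$. The main technical point is the case analysis above, showing that condition~(b) is precisely what disconnects the punctured spectrum in the absence of $\mathbb V(B,B')$; once this is in place, the appeal to Hartshorne's theorem is routine.
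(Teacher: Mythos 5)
Your proof is correct and follows essentially the same route as the paper: the paper's argument is the one-line assertion that if $\mathbb V(B,B')\notin Z$ then $Z$ (viewed in $\mathbb P^4$) fails to be locally Cohen--Macaulay at the point cut out by $\mathfrak p=(A,A',B,B')$, which is exactly what you establish. Your write-up in fact supplies the details the paper leaves implicit --- the clustering of the components through $\mathfrak p$ by their $(1,0)$-form, the role of condition (b) of Notation \ref{notation} in disconnecting the punctured spectrum, and the appeal to Hartshorne's connectedness theorem --- so it is a fuller version of the same argument.
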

\begin{proof}
	Look at $Z$ as a set of planes in $\mathbb P^4$. If $\mathbb V(B,B')\notin Z$ then  $Z$ is not locally Cohen-Macaulay at the point defined by the ideal $\mathfrak p=(A,A',B,B')$, hence $Z$ is not ACM. Contradiction.
\end{proof}

Then it is natural to give the next definition.
\begin{definition} Let $Z$ be as in Notation \ref{notation}. We say that $Z$ is \textit{v-connected} if, for each $\mathbb V(A,B), \mathbb V(A',B')\in Z_1$ where $A\neq A'$ and $B\neq B'$, we have $\mathbb V(B,B')\in~Z_2$.
\end{definition}

In the next example we show that this property is still not enough to ensure the ACM property.
\begin{example}\label{e. Z ACM X not ACM}
Let $X$ be the set of lines in $\mathbb P^1 \times \mathbb P^2$ defined by the ideal
$$
I_X= (s,x)\cap (s,y) \cap (t,x+y)\cap (x,y).
$$ 
Note that $X$ is v-connected. Moreover, from Theorem \ref{t. char ACM 1 plane}, the set $\hat X$ defined by
$$I_{\hat X}= (s,x)\cap (s,y) \cap (s,x+y)\cap (x,y)$$
is ACM.

\begin{figure}[h]
	\centering
	\subfloat[The set $X$.]{
 		\begin{tikzpicture}[scale=0.35]

\draw[dashed] (0,5) - -  (8,5);  
\draw[dashed] (3,9) - - (11,9);   
\draw[dashed] (0,5) - -  (3,9);  
\draw[dashed] (8,5) - - (11,9); 
\node [font=\footnotesize] at (12,7) { $\mathbb V(t)$};

\draw [thick] (6,5.5) - - (4,8.5);
%

\draw[dashed] (0,10) - -  (8,10);  
\draw[dashed] (3,14) - - (11,14);   
\draw[dashed] (0,10) - -  (3,14);  
\draw[dashed] (8,10) - - (11,14);
\node [font=\footnotesize] at (12,12) { $\mathbb V(s)$};
\draw [thick] (2,12) - - (9,12);
\draw [thick] (1,10.5) - - (9,13.5);

\draw [thick] (5,4) - - (5,5);
\draw [dashed] (5,5) - - (5,7);
\draw [thick] (5,7) - - (5,10);
\draw [dashed] (5,10) - - (5,12);
\draw [thick] (5,12) - - (5,15);
\node [font=\footnotesize] at (6.5,15) { $\mathbb V(x,y)$};

\end{tikzpicture} 	\label{fig2}}		\qquad
	\subfloat[The set $\hat X$.]{
	 		\begin{tikzpicture}[scale=0.45]

	\draw [thick] (6,10.5) - - (4,13.5);
	%

	\draw[dashed] (0,10) - -  (8,10);  
	\draw[dashed] (3,14) - - (11,14);   
	\draw[dashed] (0,10) - -  (3,14);  
	\draw[dashed] (8,10) - - (11,14);
	\node [font=\footnotesize] at (12,12) { $\mathbb V(s)$};
	\draw [thick] (2,12) - - (9,12);
	\draw [thick] (1,10.5) - - (9,13.5);

	\draw [thick] (5,8) - - (5,10);
	\draw[dashed] (5,10) - - (5,12);
	\draw [thick] (5,12) - - (5,15);
	\node [font=\footnotesize] at (6.5,15) { $\mathbb V(x,y)$};
	\end{tikzpicture} 
		\label{fig3}}
	\caption{ Example \ref{e. Z ACM X not ACM}.}
	\label{fig:X vs hat X}
\end{figure}

According to CoCoA, $X$ is not ACM.
A computer experiment shows that the set of fat lines, $Z$,  whose ideal is
$$I_{Z}= (s,x)\cap (s,y) \cap (t,x+y)\cap (x,y)^2$$
is ACM.
Thus that $Z$ is a  (non-reduced) ACM set of lines whose support $X$ is not ACM. 
The next result explores this idea further.
\end{example}

\begin{theorem}\label{p. fully v-connected 1}
	Let $Z=\mathbb{V}(A_1,B_1)+ \cdots+ \mathbb{V}(A_n,B_n)+ m\cdot\mathbb{V}(B_1,B_2) $ be a set of lines of~$\mathbb P^1\times \mathbb P^2$ such that
	\begin{itemize}
		\item[(1)] $\mathbb{V}(A_{n-1})\neq \mathbb{V}(A_n)$ (i.e. not all the horizontal lines are in the same plane);
		\item[(2)] $\mathbb{V}(B_i)\neq \mathbb{V}(B_j)$ for $i\neq j$, (i.e. $B_1, \cdots, B_n$ define different planes); 
		\item[(3)] $ \mathbb{V}(B_1, \ldots, B_n)= \mathbb{V}(B_1, B_2)$, (i.e. $B_1, \ldots, B_n$ define planes in the pencil containing $\mathbb{V}(B_1, B_2)$, therefore $\mathbb{V}(B_1, B_2)=\mathbb{V}(B_i, B_j)$ for all $i\neq j$).
	\end{itemize}    
	Then $Z$ is ACM if and only if $m\ge n-1$.
\end{theorem}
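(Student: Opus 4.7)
After a linear change of coordinates on $\mathbb P^2$, I may take $B_1=x$ and $B_2=y$; condition~(3) then forces every $B_i\in k[x,y]$, while every $A_i\in k[s,t]$. In particular $F:=B_1\cdots B_n$ is a degree-$n$ form in $k[x,y]$ lying in $(x,y)^n$. Condition~(b) of Notation~\ref{notation} holds automatically, since the $B_i$ are pairwise non-proportional by~(2), so $Z$ lies in the scope of Corollary~\ref{c.Betti ACM}.

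For \emph{necessity} ($m\le n-2\Rightarrow Z$ is not ACM), I aim to exhibit a minimal generator of $I_Z$ of bidegree $(a,b)$ with $a>1$, which by Corollary~\ref{c.Betti ACM} will contradict ACMness. Consider
\[
f \;=\; A_1 A_2\,B_3 B_4\cdots B_n
\]
of bidegree $(2,n-2)$. For each $i\in\{1,\dots,n\}$, either $A_i$ (when $i\le 2$) or $B_i$ (when $i\ge 3$) divides $f$, so $f\in(A_i,B_i)$; moreover $B_3\cdots B_n\in(x,y)^{n-2}\subseteq (x,y)^m$ using $m\le n-2$, so $f\in(x,y)^m=I_{Z_2}$. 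Hence $f\in I_Z$. To complete this direction I would show $f\notin\mathfrak m\cdot I_Z$ where $\mathfrak m=(s,t,x,y,z)$, via a bigraded dimension count: any representation $f\in\mathfrak m\cdot I_Z$ would involve generators of $I_Z$ of bidegree $(a',b')<(2,n-2)$ with $a'\le 1$; the spanning set of $I_Z$ in such bidegrees is built from products $\bigl(\prod_{i\in S}A_i\bigr)\bigl(\prod_{j\notin S}B_j\bigr)$ with $|S|\le 1$ multiplied by monomials in $x,y,z$, and a careful count shows no linear-form multiple of these reproduces $f$. This minimality verification is the main technical step.

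For \emph{sufficiency} ($m\ge n-1\Rightarrow Z$ is ACM), I pass to a hyperplane section. The form $z$ is not in any associated prime of $I_Z$ (these are the $(A_i,B_i)$ and $(x,y)$), so $z$ is a non-zerodivisor on $R/I_Z$, and $Z$ is ACM in $\mathbb P^4$ iff $Z':=Z\cap\mathbb V(z)$ is an ACM curve in $\mathbb P^3=\mathrm{Proj}\,k[s,t,x,y]$. Explicitly, $Z'=m\ell_0\cup\ell_1\cup\cdots\cup\ell_n$ where $\ell_0=\mathbb V(x,y)$ and $\ell_i=\mathbb V(A_i,B_i)$, and each $\ell_i$ meets $\ell_0$ only at the point $P_i=\mathbb V(A_i,x,y)$. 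When $m\ge n-1$, one writes down candidate minimal generators of $I_{Z'}$ in the bidegrees $(1,m)$ (from $A_i\cdot\prod_{j\ne i}B_j$ multiplied by a suitable monomial in $x,y$) and $(0,\min(n,m))$ (from $F$ when $m\le n$, or from $F\cdot x^iy^{m-n-i}$ when $m>n$), matching the multigraded Betti numbers of the auxiliary scheme~$\hat Z$, which is always ACM in this setting by Theorem~\ref{t. char ACM 1 plane} (the ideal $(x,y)^m+(F)\subset k[x,y,z]$ is $(x,y)$-primary and therefore saturated). Verifying, either via a Hilbert-Burch computation on the syzygy matrix or via liaison with a bihomogeneous complete intersection contained in $I_{Z'}$, that this candidate set actually generates $I_{Z'}$ and produces a minimal length-two free resolution then concludes that $Z'$, and hence $Z$, is ACM. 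The decisive role of the bound $m\ge n-1$ is that it is exactly the threshold below which the ``forbidden'' generator $f$ of bidegree $(2,n-2)$ enters $I_Z$.

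The main obstacle is the minimality verification in the necessity direction: showing rigorously that $f$ is not an $\mathfrak m$-multiple of lower-bidegree elements of $I_Z$ reduces to an intricate combinatorial estimate on the bigraded Hilbert function of $I_Z$ whose complexity grows with both $n$ and $m$.
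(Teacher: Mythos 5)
There is a genuine gap in both directions, and the necessity direction also contains an error in the chosen witness. Your plan for necessity is to exhibit a minimal generator of $I_Z$ of bidegree $(2,n-2)$ and invoke Corollary~\ref{c.Betti ACM}, but the element $f=A_1A_2B_3\cdots B_n$ need not be a minimal generator: hypothesis~(1) only guarantees $\mathbb V(A_{n-1})\neq\mathbb V(A_n)$, so $A_1$ and $A_2$ may coincide. Concretely, take $n=3$, $m=1$, $A_1=A_2=s$, $A_3=t$, $B_1=x$, $B_2=y$, $B_3=x+y$ (this is Example~\ref{e. Z ACM X not ACM}); then $f=s^2(x+y)=s\cdot\bigl(s(x+y)\bigr)$ and $s(x+y)\in I_Z$ has bidegree $(1,1)$, so $f\in\mathfrak m\cdot I_Z$. (In this example $I_Z$ does have a minimal generator of bidegree $(2,1)$, e.g.\ $stx$, so the strategy is salvageable with a witness built from $A_{n-1}A_n$, but the choice matters.) More importantly, you yourself flag that the minimality verification is ``the main technical step'' and ``the main obstacle'' and do not carry it out; since that verification is the entire content of the necessity direction, this half of the argument is not a proof. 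The sufficiency direction has the same status: you name candidate generators and say that a Hilbert--Burch or liaison computation ``then concludes,'' but none of it is performed, and the observation that $\hat Z$ is ACM cannot substitute for it, since the paper's Example~\ref{e. Z ACM X not ACM} shows $\hat Z$ ACM does not imply $Z$ ACM.

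For comparison, the paper proves sufficiency by induction on $n$ using the sequence $0\to I_Z\to (A_1,B_1)\oplus I_{Z'}\to (A_1,B_1)+I_{Z'}\to 0$ together with the identity $(A_1,B_1)+I_{Z'}=(A_1,B_1,B_2^m)$ and a mapping cone, which sidesteps any explicit description of the generators of $I_Z$. For necessity it cuts by the hyperplane $z=0$, shows $(A_1,\bar B_1)+J'=(A_1,\bar B_1,\bar B_2^{\,u})$ with $u=\max(n-1,m)$, and derives a contradiction from the two degree-$(1,n-2)$ forms $\bar B_2\cdots\bar B_{n-1}A_n$ and $\bar B_2\cdots\bar B_{n-2}A_{n-1}\bar B_n$, using hypothesis~(1) exactly to guarantee that at least one of them avoids $(A_1)$ --- precisely the point your choice of $f$ overlooks. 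If you want to pursue your route, the place to start is a full computation of the bigraded piece $[I_Z]_{(1,b)}$ for all $b$, since that is what controls whether a degree-$(2,n-2)$ element can lie in $\mathfrak m\cdot I_Z$.
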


\begin{proof} 
First assume $n=2$, i.e.,  $Z=\mathbb{V}(A_1,B_1)+\mathbb{V}(A_2,B_2)+ m\cdot\mathbb{V}(B_1,B_2)$.  
	\begin{itemize}
		\item[$(\Rightarrow)$] By Lemma \ref{l. v-connected}, $Z$ ACM implies  $m\ge 1$.
		\item[$(\Leftarrow)$] 	On the other hand, take $m\ge 1$ and consider the following short exact sequences
		\begin{equation}\label{eq.1}
		0 \to I_Z \to (A_1,B_1) \oplus \left [ (A_2,B_2) \cap (B_1,B_2)^m \right ] \to \left  (A_1,B_1)+[(A_2,B_2)\cap(B_1,B_2)^m \right ] \to 0,
		\end{equation} 
and
		\begin{equation}\label{eq.2}
		0\to (A_2,B_2)\cap(B_1,B_2)^m\to (A_2,B_2)\oplus(B_1,B_2)^m\to (A_2,B_2)+(B_1,B_2)^m\to 0.
		\end{equation} 
		
\noindent Look at \eqref{eq.2} and note that $(A_2,B_2)+(B_1,B_2)^m=(A_2,B_2,B_1^m)$. Thus, the ideal $(A_2,B_2)+(B_1,B_2)^m$ defines an ACM scheme of height 3. 
		Since the schemes defined by $(A_2, B_2)$ and $(B_1,B_2)^m$ are ACM of height 2, then a mapping cone argument gives that the ideal  $(A_2,B_2)\cap(B_1,B_2)^m$ has projective dimension 2.
		Now consider~\eqref{eq.1}, and note that 
		\[
		(A_1,B_1)+ \left [ (A_2,B_2)\cap(B_1,B_2)^m \right ] = (A_1,B_1,B_2^m),
		\] 
so it defines an ACM scheme of height 3. Moreover, the module in the middle of the sequence has projective dimension 2, hence  $Z$ is ACM. 
\end{itemize}
	
\noindent Now we proceed by induction, the base case having just been proven. Assume that the statement is true for $n-1 \geq 1$ and assume that $n \geq 3$. That is, assume that if $Z$ has $n-1$ reduced components, then $Z$ is ACM if and only if $m \geq n-2$.
Set 
\[
Z'=\mathbb{V}(A_2,B_2)+ \cdots+ \mathbb{V}(A_n,B_n)+ m\mathbb{V}(B_1,B_2)
\]
and consider the following short exact sequence:
\begin{equation}\label{eq.3}
0\to I_Z \to (A_1,B_1)\oplus I_{Z'}\to (A_1,B_1)+I_{Z'}\to 0.
\end{equation}

\begin{itemize}

\item[$(\Leftarrow)$] 
		If $m\ge n-1$ then, by the inductive hypothesis, $Z'$ is ACM.
		Thus, in order to prove that $Z$ is ACM, by the sequence \eqref{eq.3}, it is enough to show that $(A_1,B_1)+I_{Z'}$ defines an ACM variety of height 3. 
		We show that $(A_1,B_1)+I_{Z'}=(A_1,B_1,B_2^m).$ 
	\begin{itemize}
			\item[$\bullet$] $(A_1,B_1)+I_{Z'}\subseteq(A_1,B_1,B_2^m)$. Indeed, if $F\in I_{Z'}$ then $F\in (B_1,B_2)^m\subseteq (B_1,B_2^m)$;
			\item[$\bullet$] $(A_1,B_1)+I_{Z'}\supseteq(A_1,B_1,B_2^m)$. We first claim that the form $F=B_2^{m-n+2}\cdot B_3\cdots B_n$ belongs to $I_{Z'}$. By hypotheses $2)$ and $3)$,  for $j>3$ we have $B_j=\lambda_jB_1+\mu_jB_2$, where $\lambda_j,\mu_j\neq 0$. Thus, $F=F'B_1+aB_2^m\in I_{Z'}$ and therefore $B_2^m\in (A_1,B_1)+I_{Z'}.$  
	\end{itemize}
\vspace{.1in}

\item [$(\Rightarrow)$]
Assume that $Z$ is ACM. View $Z$ as a set of planes  in $\mathbb P^4$. By the sequence \eqref{eq.3}, the ideal $(A_1,B_1)+I_{Z'}$ is  saturated in $k[\mathbb P^4]$. Without loss of generality we can assume the linear form $z$ corresponds to a general form of degree one in  $k[x,y,z]$, so that the intersection of $Z$ with $\mathbb V(z)$ is a proper hyperplane section of $Z$ in $\mathbb P^4$, all but one component of $Z\cap\mathbb V(z)$ are reduced, and  $(\bar B_1 ), \ldots , (\bar B_n )$ are all different in $R/(z)$.
Denoting by $J$ the ideal defining $Z\cap\mathbb V(z)$ in $\mathbb P^3$, we have that 
$J \cong (I_Z+(z))/(z)$ and it is Cohen-Macaulay in  $k[\mathbb P^3]\cong R/(z)$. Let us denote by $Y$ the scheme defined by $J$ in $\mathbb P^3$. Hence $Y$ is an ACM set of lines (one non-reduced) in $\mathbb P^3$; precisely 
\[
J= (A_1,\bar B_1)\cap \cdots \cap (A_n,\bar  B_n)\cap (\bar  B_1,\bar  B_2)^m
\]
where $\mathbb V(\bar  B_j)= \mathbb V(B_j)\cap \mathbb V(z)$. (Recall that $A_i \in k[s,t]$ while $B_j \in k[x,y,z]$, so abusing notation we will view $A_i \in k[\mathbb P^3]$.)  We set  
\begin{equation} \label{shorter pf}
J'= (A_2,\bar  B_2)\cap \cdots \cap (A_n,\bar  B_n)\cap (\bar  B_1,\bar  B_2)^m 
\end{equation}
and denote by $Y'$ the scheme defined by $J'$, and by $\lambda$ the line defined by $(A_1, \bar B_1)$.
Consider the short exact sequence
\begin{equation}\label{eq.J}
0 \to J \to (A_1,\bar B_1)\oplus J'\to (A_1,\bar B_1)+J'\to 0.
\end{equation}
Since $J$ defines an ACM curve $Y$, sheafifying and taking the long exact sequence in cohomology gives us that the ideal  $(A_1,\bar  B_1) +J'$ is saturated in $k[\mathbb P^3]$; in particular it has height 3, and 
\begin{equation} \label{key eqn}
(A_1, \bar B_1) + J' = (A_1, \bar B_1, \bar B_2^u).
\end{equation}

What is $u$? We have by (\ref{key eqn}) that
$u$ is the minimum such that $[(A_1 , \bar B_1 ) + J']_{(0,u)} \neq [(A_1 ,\bar B_1 )]_{(0,u)}$ . 
Since $\bar B_2 \bar B_3 \ldots \bar B_n$ is the only minimal generator of 
$(A_ 2 , \bar B_2 )\cap \ldots \cap (A_n , \bar B_n )$ with bidegree $(0, v)$, 
the equation (\ref{shorter pf})  implies that $u = \max(n - 1, m).$ In particular, $u \geq n-1$.

Now we assume by contradiction that $m<n-1.$ Consider the two forms
\[
\begin{array}{ccl}
F_1 & = & \bar B_2\cdots \bar B_{n-2} \bar B_{n-1} A_n \ \hbox{  and} \\

F_2 & = & \bar B_2\cdots \bar B_{n-2} A_{n-1}\bar B_n.
\end{array}
\]
Both $F_1$ and $F_2$ have degree $(1,n-2)$ and both belong to $J' $,  and so, 
\[
F_1, F_2 \in  (A_1, \bar B_1) + J' = (A_1, \bar B_1, \bar B_2^u).
\]
By hypothesis (2), $F_1, F_2\notin (\bar B_1).$
By hypothesis (1), at least one among $F_1$ and $F_2$ does not belong to $(A_1)$, say $F_1$.   This means that $F_1\in (\bar B_1, \bar B_2^u)$ and hence $\bar B_2\cdots \bar B_{n-1} \in (\bar B_1, \bar B_2^u)$, so $u\le n-2$ which contradicts what we have shown above.
Therefore we have shown $m \geq n-1$ as desired.\end{itemize} \end{proof}

As a corollary, 
Theorem \ref{p. fully v-connected 1} can be translated to a result about lines in $\mathbb P^3$.

\begin{corollary} \label{cor:curve P3}
Let $\lambda_1$ and $\lambda_2$ be two skew lines in $\mathbb P^3$.
	Let $\alpha_1,\ldots \alpha_n$ and $\beta_1,\ldots \beta_n$ be planes containing $\lambda_1$  and  $\lambda_2$ respectively, such that
	\begin{itemize}
		\item $\beta_i\neq \beta_j$ for any $i\neq j$; 
		\item  $\alpha_{n-1}\neq \alpha_{n}.$
	\end{itemize}
	  Set $\ell_i=\alpha_i\cap \beta_i$. Then, the set of lines $Y=\ell_1+ \cdots +\ell_n + m\cdot \lambda_2$ is ACM if and only if~$m\ge n-1$.
\end{corollary}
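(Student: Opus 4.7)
The plan is to reduce to Theorem \ref{p. fully v-connected 1} by lifting $Y \subset \mathbb P^3$ to a fat configuration of lines $Z \subset \mathbb P^1 \times \mathbb P^2$, and then recovering $Y$ as a proper hyperplane section of $Z$ (viewed in $\mathbb P^4$ via Remark \ref{P4 connection}).

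First, since $\lambda_1$ and $\lambda_2$ are skew, I would choose homogeneous coordinates $s,t,x,y$ on $\mathbb P^3$ so that $\lambda_1 = \mathbb V(s,t)$ and $\lambda_2 = \mathbb V(x,y)$. Then each $\alpha_i$ is cut out by a linear form $A_i \in k[s,t]_1$ and each $\beta_i$ by a linear form $b_i \in k[x,y]_1$, so $\ell_i = \mathbb V(A_i, b_i)$; moreover, because the $b_i$ are pairwise non-proportional, $(b_i, b_j) = (x,y)$ for every $i \neq j$, and in particular $\lambda_2 = \mathbb V(b_1, b_2)$.

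Next, introduce an auxiliary variable $z$ and pass to $\mathbb P^1 \times \mathbb P^2$ with coordinate ring $R = k[s,t,x,y,z]$, where $\deg(s) = \deg(t) = (1,0)$ and $\deg(x) = \deg(y) = \deg(z) = (0,1)$. Viewing each $A_i$ as bihomogeneous of bidegree $(1,0)$ and setting $B_i := b_i$, bihomogeneous of bidegree $(0,1)$, define
\[
Z \ = \ \mathbb V(A_1, B_1) + \cdots + \mathbb V(A_n, B_n) + m\cdot \mathbb V(B_1, B_2) \ \subset \ \mathbb P^1 \times \mathbb P^2.
\]
The three hypotheses of Theorem \ref{p. fully v-connected 1} are then immediate: conditions (1) and (2) translate directly from $\alpha_{n-1} \neq \alpha_n$ and $\beta_i \neq \beta_j$, while (3) is automatic because every $B_i$ lies in the two-dimensional space $k[x,y]_1$, so any two independent among them generate the same ideal $(x,y)$ as the whole collection. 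The theorem therefore gives that $Z$ is ACM if and only if $m \geq n-1$.

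Finally, I would link $Z$ back to $Y$. Regard $Z$ as a union of planes in $\mathbb P^4$ with coordinates $s,t,x,y,z$. Each associated prime of $I_Z$, namely $(A_i, B_i)$ and $(x,y) = (B_1, B_2)$, is generated by forms that do not involve $z$, so $z$ is a non-zerodivisor on $R/I_Z$. Consequently the hyperplane section of $Z$ by $\mathbb V(z)$ in $\mathbb P^4$ is proper, and its saturated ideal in $k[s,t,x,y]$ is precisely $I_Y$. Exactly as in the proof of Proposition \ref{p. Z ACM -> hat Z ACM} (invoking \cite[Proposition 2.1]{HU} for the less standard direction), the ACM property transfers in both directions between $Z$ and this hyperplane section, so $Y$ is ACM if and only if $Z$ is ACM. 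Combined with the theorem, this yields the corollary. The only non-mechanical step is this hyperplane section equivalence; the translation of the data from $\mathbb P^3$ into the bigraded setting is otherwise straightforward.
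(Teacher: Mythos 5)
Your proposal is correct and follows essentially the same route as the paper: both lift $Y$ to the configuration $Z$ of Theorem \ref{p. fully v-connected 1} (after normalizing $\lambda_1=\mathbb V(s,t)$, $\lambda_2=\mathbb V(x,y)$ so the $B_i$ lie in $k[x,y]_1$), identify $Y$ with the proper hyperplane section of $Z$ by $\mathbb V(z)$, and transfer the ACM property via \cite[Proposition 2.1]{HU}. The verification of hypotheses (1)--(3) and the non-zerodivisor claim for $z$ are handled the same way in the paper.
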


\begin{proof}
We have $k[\mathbb P^4] = k[s,t,x,y,z]$ as usual. 
In Theorem \ref{p. fully v-connected 1}, viewing the configuration as a union of planes in $\mathbb P^4$ (one not reduced), the $A_i$ are hyperplanes vanishing on the 2-plane defined by $s=t=0$ and (without loss of generality, after possibly a change of variables) the $B_i$ can be taken to be hyperplanes vanishing on the 2-plane defined by $x=y=0$. In our current setting, without loss of generality (after possibly a change of variables) we can take $k[\mathbb P^3] = k[s,t,x,y]$, and set $\lambda_1$ to be the line defined by $s=t=0$ and set $\lambda_2$ to be the line defined by $x=y=0$.  Note that $z$ is a non-zerodivisor for $R/I_Z$ in Theorem \ref{p. fully v-connected 1}. Then $Y$ is the hyperplane section of $Z$ cut out by the hyperplane defined by $z$. By \cite[Proposition 2.1]{HU}, $Y$ is ACM if and only if $Z$ is ACM. The result follows immediately.
\end{proof}

Theorem \ref{p. fully v-connected 1} leads us to introduce a new definition.
\begin{definition}\label{d.fully v connected}
	Let $Z$ be as in Notation \ref{notation}. Assume that whenever $\mathbb V(A,B), \mathbb V(A',B')\in Z_1$ with $A\neq A'$ and $B\neq B'$, the vertical line $V(B,B')$ meets $n$ horizontal lines of $Z$ (where the integer $n \geq 2$ depends on the choice of $B$ and $B'$). 
	We say that $Z$ is \textit{fully v-connected}  if $(n-1)\mathbb V(B,B')\subseteq Z$, i.e., for each such $B, B'$ the line $\mathbb V(B,B')$ is contained in $Z$ with  multiplicity at least $n-1$.  
\end{definition} 

It was shown in \cite[Lemma 4.1]{CFGetc} that any set of points in $\mathbb P^1 \times \mathbb P^1$ is locally a complete intersection, and hence locally Cohen-Macaulay. In the next theorem we characterize the local Cohen-Macaulay property of $Z$ in terms of the fully v-connected property. 

\begin{theorem}\label{t. local CM <-> fully v connected}
	 Let $Z=Z_1\cup Z_2$ be a set of lines of $\mathbb P^1\times \mathbb P^2$ such that  the assumptions of Notation \ref{notation} hold, i.e.
	 \begin{itemize}
	 	\item[(a)] $Z_1$ is a non-empty set of reduced  horizontal lines;
	 	\item[(b)] two different horizontal lines of $Z$ are not contained in a hyperplane defined by a form of degree $(0,1)$, i.e. if $(A,B), (A',B)\in Z_1$ then $(A)=(A').$ 
	 \end{itemize}   
 Then, $Z$ is locally Cohen-Macaulay if and only if $Z$ is fully v-connected.
	
\end{theorem}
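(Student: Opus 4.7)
My plan is to analyze the local Cohen--Macaulay property of $Z$ (viewed as a union of $2$-planes in $\mathbb{P}^4$) point by point, and to use a cone reduction to relate the local condition at each singular point to Corollary \ref{cor:curve P3}.

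First, I would identify the relevant singular points. The key ones are the points of the form $P = \mathbb{V}(s, t, B, B')$, which arise from any pair of horizontal lines $\mathbb{V}(A, B), \mathbb{V}(A', B') \in Z_1$ with $A \neq A'$ and $B \neq B'$ (note that $B \neq B'$ is forced by Notation \ref{notation}(b)). These are exactly the points that already appeared in the proof of Lemma \ref{l. v-connected}.

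At such a $P$, the components of $Z$ passing through $P$ are precisely the $n$ horizontal planes $\mathbb{V}(A_i, B_i) \in Z_1$ whose $B_i$ lies in the pencil $\langle B, B'\rangle$, together with $m$ copies of the vertical plane $\mathbb{V}(B, B')$ (where $m \geq 0$ is its multiplicity in $Z_2$). All these $2$-planes are linear subspaces sharing the single point $P$, so in the affine chart centered at $P$ their union is a cone with vertex $P$ over a configuration of lines in $\mathbb{P}^3$, and this configuration is exactly the one of Corollary \ref{cor:curve P3}. Invoking the cone lemma (the local ring at the vertex of a cone is Cohen--Macaulay iff the base is ACM), local CM of $Z$ at $P$ is equivalent to ACM of the base, which by Corollary \ref{cor:curve P3} holds iff $m \geq n-1$. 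This is precisely the fully v-connected condition at the pair $(B, B')$. In particular, the forward implication follows immediately (local CM at $P$ forces $m \geq n-1$), and the reverse implication at points of this type reduces to applying the corollary in the other direction.

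What remains is to verify local CM at the other singular loci of $Z$: lines where two components meet transversely (where the union is locally a hypersurface in a smooth ambient, hence automatically Cohen--Macaulay) and points on the vertex line $\lambda = \mathbb{V}(x,y,z)$. At points of $\lambda$ a similar cone reduction, exploiting the cone structure of Remark \ref{cone remark}, converts local CM into an ACM question for a configuration of lines in $\mathbb{P}^3$ arising from the vertical planes of $Z_2$ (through a common point) and the horizontal planes with a shared $A$ (contained in a hyperplane). Under Notation \ref{notation}, and in particular hypothesis (b) forcing distinct $B_i$'s for horizontal planes sharing an $A$, this auxiliary configuration is ACM unconditionally, so local CM at these points is automatic.

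The main obstacle is the analysis at singular points on $\lambda$, which do not appear directly in the formulation of fully v-connectedness. Resolving it requires showing that the cone-base configuration produced at each such point is ACM under Notation \ref{notation}; this is a technical but essentially combinatorial check once the cone reduction is set up, relying on the distinctness of the $B_i$'s and on the ACMness of finite sets of points in $\mathbb{P}^2$.
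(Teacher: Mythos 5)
Your overall strategy coincides with the paper's: view $Z$ as a union of $2$-planes in $\mathbb P^4$, stratify the singular locus, and at each point of the form $\mathbb V(s,t,B,B')$ observe that the components of $Z$ through that point form a cone with vertex there over exactly the configuration of Corollary \ref{cor:curve P3}, so that local Cohen--Macaulayness at such a point is equivalent to $m\ge n-1$. This is precisely how the paper proves the forward implication and the corresponding part of the converse, and that portion of your argument, including the identification of which components pass through $\mathbb V(s,t,B,B')$, is correct.

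The gap is in your treatment of the points of $\lambda=\mathbb V(x,y,z)$, which you rightly single out as the remaining obstacle but then dismiss with the unproved assertion that the cone-base configuration there is ``ACM unconditionally.'' That assertion is false as stated. A horizontal plane $\mathbb V(A,B)$ always meets $\lambda$, namely at the point $P_A=\mathbb V(A,x,y,z)$, and if $Z$ also contains a vertical line $\mathbb V(B',B'')$ with $B\notin\langle B',B''\rangle$, then these two $2$-planes meet only at $P_A$; the corresponding cone base is a pair of skew lines in $\mathbb P^3$, which is not ACM, and indeed two planes meeting in a single point have depth $1$ and dimension $2$ there. The simplest instance, $Z=\mathbb V(s,x)\cup\mathbb V(y,z)$, satisfies Notation \ref{notation} and is vacuously fully v-connected, so local Cohen--Macaulayness at points of $\lambda$ is genuinely not automatic and cannot be deferred as a routine combinatorial check: any argument there must confront the interaction between the horizontal planes and the vertical cone, which the fully v-connected condition does not see. (The paper's own proof treats $\lambda$ only via the observation that the union of the vertical planes alone is a cone over fat points in $\mathbb P^2$, and is likewise silent about horizontal components through points of $\lambda$; but you cannot appeal to that, and as written this step of your proposal does not go through.)
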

\begin{proof}

First assume that $Z$ is locally Cohen-Macaulay. Let $(B,B')^m$ define a vertical fat line $m L$ in $Z$. It is not restrictive to assume that $(B,B')=(x,y)$,  and also that $z$ is a regular element in $R/I_Z$. 
Let $\mathbb V(A_1,B_1), \ldots, \mathbb V(A_n,B_n)\in Z_1$ be the horizontal lines meeting $L$.
Let $P \in \mathbb P^4$ be the point defined by $I_P = (s,t,x,y)$.  In particular, $(x,y)=(B_1, \ldots, B_n),$ where $B_1, \ldots, B_n$ define different hyperplanes. 
Let  $W=\mathbb V(A_1,B_1)+ \cdots+ \mathbb V(A_n,B_n)+mL$ be the corresponding subscheme of $Z$. Note that  $W$  satisfies  the hypotheses of Theorem \ref{p. fully v-connected 1}, so $W$ is ACM if and only if $m\ge n-1$.   
		
Notice that all components of $W$ (viewed in $\mathbb P^4$) contain the point $P$, and no other component of $Z$ does. Thus since the components of $W$ are linear, $W$ is the cone in $\mathbb P^4$ with vertex $P$ over a curve, $C$, in the hyperplane $\mathbb P^3$ defined by $z=0$, and $C$ is precisely of the form given in Corollary \ref{cor:curve P3}.
Now, since $Z$ is locally Cohen-Macaulay, in particular at $P$, we conclude that $C$ is ACM, and hence by Corollary \ref{cor:curve P3} we have that $m \geq n-1$.  Since this holds for each vertical line (noting that $m$ and $n$ will change for different vertical lines), $Z$ is fully v-connected.

Conversely, assume that $Z$ is fully v-connected. Certainly at any smooth point of $Z$ (viewed in $\mathbb P^4$), $Z$ is locally Cohen-Macaulay. Similarly, at the intersection of two or more horizontal lines that do not have a vertical line through the point of intersection, locally it is a complete intersection, hence locally Cohen-Macaulay.

Although the vertical  (possibly fat) lines are disjoint in $\mathbb P^1 \times \mathbb P^2$, the corresponding (fat) planes do meet in $\mathbb P^4$. The intersection of any finite number of such planes is one-dimensional. Still, this is a cone over a set of (fat) points in $\mathbb P^2$, so it is ACM and hence is locally Cohen-Macaulay along this locus.

Consider the intersection in $\mathbb P^1 \times \mathbb P^2$ of a (fat) vertical line of multiplicity $m$ and a collection of $n$ horizontal lines. Without loss of generality assume that the vertical line has support given by $(x,y)$. 	Let  $J=(s,t,x,y)$. Note that the ideal $J$ defines a point $P$ in $\mathbb P^4$. Let  $W=\mathbb V(A_1,B_1)+ \cdots+ \mathbb V(A_n,B_n)+mL$ be the subscheme of $Z$ corresponding to the components, viewed in $\mathbb P^4$, containing $P$. Note that  $W$  satisfies  the hypotheses of Theorem \ref{p. fully v-connected 1}, so $W$ is ACM if and only if $m\ge n-1$. 

Since the components of $W$ are linear, $W$ is the cone in $\mathbb P^4$ with vertex $P$ over a curve, $C$, in the hyperplane $\mathbb P^3$ defined by $z=0$ (which is a non-zerodivisor), and $C$ is precisely of the form given in Corollary \ref{cor:curve P3}.  Since $Z$ is fully v-connected, we know that $m \geq n-1$. Thus, by Corollary \ref{cor:curve P3}, $C$ is ACM. Hence $Z$ is locally Cohen-Macaulay at $P$.
\end{proof}

\begin{remark}\label{r. Z ACM} Let $Z=Z_1\cup Z_2$ be a set of lines of $\mathbb P^1\times \mathbb P^2$ such that the assumptions of Notation~\ref{notation} hold.
We wonder  which hypotheses  ensure $Z$ to be ACM. Note that the fully v-connected condition is not enough. Take for instance $Z=\mathbb V( s, x )\cup\mathbb V( y, z )$; it trivially satisfies Definition \ref{d.fully v connected} and it is not ACM. So, as shown in Proposition \ref{p. Z ACM -> hat Z ACM}, we at least need to ask  $\hat Z$ to be ACM. 
It would be interesting to show if these two conditions,  $Z$ fully v-connected together with $\hat Z$  ACM, are sufficient to guarantee the ACM property for $Z.$
\end{remark}

\end{document}